\newcommand*{\suchthat}{%
    \;%
    \ifnum\currentgrouptype=16\relax
        \middle|%
    \else%
        \iftoggle{WithinBracMacro}{
            |
        }{
            |%
        }%
    \fi%
    \;%
}%
\newtheorem{definition}{Definition}
\newtheorem{remark}{Remark}
\newtheorem{theorem}{Theorem}
\newtheorem{lemma}{Lemma}
\theoremstyle{definition}
\newenvironment{smallarray}[1]
 {\null\,\vcenter\bgroup\scriptsize
  \arraycolsep=.13885em
  \hbox\bgroup$\array{@{}#1@{}}}
 {\endarray$\egroup\egroup\,\null}
\DeclareRobustCommand{\svdots}{
  \vbox{%
    \baselineskip=0.33333\normalbaselineskip
    \lineskiplimit=0pt
    \hbox{.}\hbox{.}\hbox{.}%
    \kern-0.2\baselineskip
  }%
}
\begin{document}

\title{Efficient Lyapunov-Based Stabilizability and Detectability Tests: From LTI to LPV Systems}
\author{T.~J.~{Meijer}, V.~S.~{Dolk} and W.~P.~M.~H.~{Heemels},
\thanks{Corresponding author: T.~J.~{Meijer}.}
\thanks{Tomas Meijer and Maurice Heemels are with the Department of Mechanical Engineering, Eindhoven University of Technology, The Netherlands. (e-mail: t.j.meijer@tue.nl; m.heemels@tue.nl).}%
\thanks{Victor Dolk is with ASML, De Run 6665, 5504 DT Veldhoven, The Netherlands. (e-mail: victor.dolk@asml.com)}}

\maketitle

\begin{abstract}
	In this technical note, we generalize the well-known Lyapunov-based stabilizability and detectability tests for linear time-invariant (LTI) systems to the context of discrete-time (DT) polytopic linear parameter-varying (LPV) systems. To do so, we exploit the mathematical structure of the class of poly-quadratic Lyapunov functions, which enables us to formulate conditions in the form of linear matrix inequalities (LMIs). Our results differ from existing conditions in that we adopt weaker assumptions on the parameter dependence of the controllers/observers and our method does not require explicitly incorporating these gains, which renders the new conditions less computationally demanding. Interestingly, our results also have important implications for existing controller and observer synthesis techniques based on poly-QLFs. In fact, we show that existing observer synthesis results are stronger than was anticipated in the sense that they are necessary for a larger class of observers. Moreover, we also introduce new controller synthesis conditions and illustrate our results by means of a numerical case study.
\end{abstract}

\begin{IEEEkeywords}
	Linear matrix inequalities, robust control, parameteric uncertainty, switched linear systems
\end{IEEEkeywords}

\section{Introduction}
\IEEEPARstart{S}{ystematic} construction of Lyapunov functions (LFs) is a crucial problem in many control applications. By constructing a LF for a (nonlinear) system, we can check and certify its (robust) stability properties. By simultaneously designing a controller or estimator while constructing LFs for the resulting closed-loop or estimation error dynamics, we can synthesize stabilizing controllers and observers. In addition, we can incorporate and optimize closed-loop performance criteria in the design procedure~\cite{Scherer2011}. What makes these techniques even more powerful are the converse Lyapunov theorems, see, e.g.,~\cite{Khalil1996-2nd-edition,Jiang2002}, which show that, generally, any stable system admits a (converse) LF. In practice, however, (converse) LFs often cannot be systematically constructed. Linear time-invariant (LTI) systems form an exception to this because they are asymptotically stable if and only if they admit a quadratic Lyapunov function (QLF)~\cite{Hespanha2018}, which can be constructed using LMI-based conditions~\cite{Scherer2011}. This fact and the non-conservatism it induces, is exploited in many powerful LMI-based tools for LTI systems, such as the well-known Lyapunov-based stabilizability and detectability analysis conditions, see, e.g.,~\cite{Hespanha2018}, for DT LTI systems of the form
\begin{equation}
    \begin{aligned}
        x_{k+1} &= Ax_k + Bu_k,\\
        y_k &= Cx_k,
    \end{aligned}
    \label{eq:lti-system}
\end{equation}
where $x_k\in\mathbb{R}^{n_x}$, $u_k\in\mathbb{R}^{n_u}$ and $y_k\in\mathbb{R}^{n_y}$ denote the state, input and output, respectively, and $A$, $B$ and $C$ are matrices of appropriate dimensions. According to these tests, the system~\eqref{eq:lti-system} is detectable if and only if there exists a symmetric-positive-definite matrix $\bar{P}\in\mathbb{S}^{n_x}_{\succ 0}$ such that~\cite{Hespanha2018}
\begin{equation}
    \bar{P} - A^\top \bar{P}A + C^\top C\succ 0.
    \label{eq:lti-det-lmi}
\end{equation}
Similarly, the system~\eqref{eq:system} is stabilizable if and only if there exists $\bar{S}\in\mathbb{S}^{n_x}_{\succ 0}$ satisfying~\cite{Hespanha2018}
\begin{equation}
    \bar{S} - A\bar{S}A^\top +BB^\top \succ 0.
    \label{eq:lti-stab-lmi}
\end{equation}
Interestingly, the conditions above do not compute an observer gain or a controller gain that renders the resulting, respectively, error system and closed-loop system asymptotically stable (however, we can reconstruct such gains based on the obtained solutions $\bar{P}$ and $\bar{S}$, respectively). Consequently,~\eqref{eq:lti-det-lmi} and~\eqref{eq:lti-stab-lmi} are more computationally efficient than their synthesis counterparts that do compute these gains explicitly, next to the LF.

In this note, we are interested in a class of discrete-time polytopic LPV systems, see~\eqref{eq:system} below, for which the existence of a QLF is only sufficient and, thereby, conservative~\cite{Mason2007} for global uniform asymptotic stability (GUAS). With the aim of striking a better balance between ease of constructibility and degree of conservatism, poly-quadratic Lyapunov functions (poly-QLFs) have been adopted in the literature. By exploiting the polytopic structure of this class of LFs and the system itself, poly-QLFs can be constructed using LMI-based techniques. This idea has resulted in the development of many powerful analysis and synthesis techniques, see, e.g.,~\cite{Daafouz2001,Heemels2010,Oliveira2022,Pereira2021}. Many of these conditions are not only sufficient, but also necessary for the existence of a poly-QLF and, possibly, a controller or observer with a certain polytopic structure, see, e.g.,~\cite{Pandey2018,Heemels2010,Daafouz2001}. In other words, the systematic, powerful and non-conservative nature of many Lyapunov-based techniques that we have come to love in the LTI setting, can be and have been extended for this class of LPV systems. However, to the best of our knowledge, no generalizations of the Lyapunov-based tests in~\eqref{eq:lti-det-lmi} and~\eqref{eq:lti-stab-lmi} to LPV systems have been proposed yet and this is one of the gaps we aim to fill in this note.

More generally speaking, in this work, we consider stabilizing controller and observer synthesis for polytopic LPV systems (hence, including stabilizability and detectability tests), for which we propose several novel LMI-based analysis and synthesis conditions. In doing so, this note also performs a brief literature survey by putting several existing LPV synthesis conditions into perspective, relative to which we position our own contributions: Firstly, we propose new necessary and sufficient LMI-based \textit{stability} conditions for LPV systems (\ref{item:lem-equiv-pqs-4} in Lemma~\ref{lem:equiv-pqs} below) that, analogous to~\cite{Daafouz2001}, can be used to synthesize parameter-dependent poly-quadratically (poly-Q) stabilizing, i.e., rendering the closed-loop system poly-QS, controllers. In contract with the conditions in~\cite{Daafouz2001}, we do not introduce additional slack variables, which results in lower computational complexity for the corresponding LMIs. Secondly, we also develop a novel LMI-based analysis condition that is \textit{necessary and sufficient} for poly-Q \textit{detectability} (Theorem~\ref{thm:poly-q-detectability-analysis} below), which essentially generalizes~\eqref{eq:lti-det-lmi} from the LTI case to the polytopic LPV case. Interestingly, we are able to obtain observers with the same structure as in~\cite{Heemels2010} without restricting the parameter-dependence a-priori as done in~\cite{Heemels2010}. Thereby, we show that the results in~\cite{Heemels2010} are even stronger than claimed originally in their initial publication as they are not only necessary for the existence of a particular observer, but even necessary for a more generic structure, see Remark~\ref{rem:heemels2010}. Thirdly, we also generalize~\eqref{eq:lti-stab-lmi} from the LTI to the polytopic LPV setting by developing, separately, a parameter-dependent necessary and sufficient analysis condition (Theorem~\ref{thm:poly-quadratic-stabilizability-nec-suff}), however, the parameter dependence makes it intractable to systematically verify its feasibility. Therefore, we also provide sufficient conditions (Theorem~\ref{thm:poly-q-stabilizability-analysis}) in the form of a finite number of LMIs that can be systematically solved. Interestingly, note that our novel conditions are equally valid in the context of \textit{switched linear systems}~\cite{Philippe2016,Athanasopoulos2014,Liberzon2003} as discussed in~\cite{Mason2007}, which makes their use even broader. We demonstrate our results in a numerical case study and investigate their computational complexity (see Section~\ref{sec:comp-compl}).

The remainder of this note is organized as follows. In Section~\ref{sec:prelim}, we introduce the model class and relevant preliminaries. Sections~\ref{sec:detectability} and~\ref{sec:stabilizability} present our main results on detectability and stabilizability. We illustrate our results in a numerical case study in Section~\ref{sec:case-study}. Section~\ref{sec:conclusions} presents some conclusions and all proofs can be found in the Appendix.

{\bf Notation.} The sets of real and non-negative real and natural numbers are denoted, respectively, $\mathbb{R}=(-\infty,\infty)$, $\mathbb{R}_{\geqslant 0}=[0,\infty)$ and $\mathbb{N}=\{0,1,2,\hdots\}$. We also denote $\mathbb{N}_{\geqslant n}=\{n,n+1,n+2,\hdots\}$ and $\mathbb{N}_{[n,m]}=\{n,n+1,n+2,\hdots,m\}$ for $n,m\in\mathbb{N}$. The set of $n$-by-$n$ symmetric matrices is denoted by $\mathbb{S}^n$. $M\preccurlyeq 0$, $M\succ 0$ and $M\succcurlyeq 0$ mean, respectively, that $M\in\mathbb{S}^n$ is negative semi-definite, positive definite and positive semi-definite. The sets of such matrices are $\mathbb{S}^{n}_{\preccurlyeq 0}$, $\mathbb{S}^n_{\succ 0}$ and $\mathbb{S}^{n}_{\succcurlyeq 0}$, respectively. The symbol $I$ is an identity matrix of appropriate dimensions and $\star$ completes a symmetric matrix, e.g., $\begin{bmatrix}\begin{smallmatrix} A & \star\\B & C\end{smallmatrix}\end{bmatrix}=\begin{bmatrix}\begin{smallmatrix}A & B^\top\\B & C\end{smallmatrix}\end{bmatrix}$. For $A\in\mathbb{R}^{n\times m}$, $A^\top$ is its transpose, $\operatorname{He}(A)=A+A^\top$ and $A_{\perp}$ is a matrix whose columns are a basis for $\operatorname{ker}A$. Moreover, $\operatorname{diag}\{A_i\}_{i\in\mathbb{N}_{[1,n]}}$ is a block-diagonal matrix with blocks $A_i$, $i\in\mathbb{N}_{[1,n]}$. Let $\bm{e}_i = (0_{(i-1)\times 1},1,0_{(n-i)\times 1})\in\mathbb{R}^{n}$, $i\in\mathbb{N}_{[1,n]}$, be the $i$-th unit vector. The notation $x\leftarrow y$ means ``replace'' $x$ by $y$.

\section{Model class and preliminaries}\label{sec:prelim}
We consider discrete-time LPV systems of the form
\begin{equation}
    \begin{aligned}
		x_{k+1} &= A(p_k)x_k + Bu_k,\\
        y_k &= Cx_k,	
    \end{aligned}
	\label{eq:system}
\end{equation}
where $x_k\in\mathbb{R}^{n_x}$, $u_k\in\mathbb{R}^{n_u}$ and $y_k\in\mathbb{R}^{n_y}$ denote, respectively, the state, control input and measured output at time $k\in\mathbb{N}$. The unknown parameter $p_k$ belongs to a known compact set $\mathbb{P}$, i.e., $p_k\in\mathbb{P}\subseteq\mathbb{R}^{n_p}$ for all $k\in\mathbb{N}$. 
\begin{definition}
	The set of admissible parameter sequences, denoted $\mathcal{P}$, is the set of all parameter sequences $\{p_k\}_{k\in\mathbb{N}}$ with $p_k\in\mathbb{P}$ for all $k\in\mathbb{N}$.
\end{definition}
\noindent The matrix-valued function $A\colon\mathbb{P}\rightarrow\mathbb{R}^{n_x\times n_x}$ and matrices $B\in\mathbb{R}^{n_x\times n_u}$ and $C\in\mathbb{R}^{n_y\times n_x}$ are known. We focus on polytopic systems~\eqref{eq:system}, whose specific mathematical structure often facilitates the development of LMI-based analysis and synthesis conditions, see, e.g.~\cite{Daafouz2001,Heemels2010,Oliveira2022,Pereira2021}.
\begin{definition}\label{dfn:polytopic}
	The system~\eqref{eq:system} is said to be polytopic, if there exist continuous\footnote{Continuity of the functions $\xi_i$, $i\in\mathcal{N}$, is assumed so that we can apply Lemma~\ref{lem:finsler} in the Appendix. However, the same result can be obtained using the fact that $A$ and $P$, introduced later in~\eqref{eq:poly-lyap}, are uniformly bounded on $\mathbb{P}$.} functions $\xi_i\colon\mathbb{P}\rightarrow\mathbb{R}_{\geqslant 0}$, $i\in\mathcal{N}\coloneqq\mathbb{N}_{[1,N]}$, such that the mapping $\xi\coloneqq(\xi_1,\xi_2,\hdots,\xi_N)$ satisfies $\xi(\mathbb{P})\subset\mathbb{X}\coloneqq\{\mu=(\mu_1,\mu_2,\hdots,\mu_N)\in\mathbb{R}_{\geqslant 0}^N\mid\sum_{i\in\mathcal{N}}\mu_i=1\}$, and matrices $A_i\in\mathbb{R}^{n_x\times n_x}$, $i\in\mathcal{N}$, such that, for all $\pi\in\mathbb{P}$,
	\begin{equation}
		A(\pi) = \sum_{\mathclap{i\in\mathcal{N}}}\xi_i(\pi)A_i.
		\label{eq:poly-A}
	\end{equation}
	If, in addition, the functions $\xi_i$, $i\in\mathcal{N}$, satisfy $\{\bm{e}_i\}_{i\in\mathcal{N}}\subset\xi(\mathbb{P})$, the system~\eqref{eq:system} is strictly polytopic.
\end{definition}
\noindent Systems~\eqref{eq:system} with affine parameter dependence and a polytopic parameter set admit a strictly polytopic representation. Other systems can often be described by polytopic systems~\eqref{eq:system}.

\subsection{Global uniform asymptotic and poly-quadratic stability}
Let $x_k(\bm{p},\bm{u},x_0)$ be the solution to~\eqref{eq:system} at time $k\in\mathbb{N}$ for initial state $x_0\in\mathbb{R}^{n_x}$, parameter sequence $\bm{p}\in\mathcal{P}$ and input sequence $\bm{u}=\{u_j\}_{j\in\mathbb{N}}$ with $u_j\in\mathbb{R}^{n_u}$ for all $j\in\mathbb{N}$. For systems~\eqref{eq:system} without input, i.e., $\bm{u}=\bm{0}=\{0\}_{j\in\mathbb{N}}$, We define GUAS~\cite{Khalil1996-2nd-edition}, below, where we consider uniformity with respect to parameter sequence $\bm{p}$ (and initial state $x_0$).
\begin{definition}\label{dfn:guas}
	The system~\eqref{eq:system} is said to be GUAS, if
	\begin{enumerate}[labelindent=0pt,labelwidth=\widthof{\ref{item:def-guas-2}},label=(D\ref{dfn:guas}.\arabic*),itemindent=0em,leftmargin=!]
		\item \label{item:def-guas-1} for any positive $\epsilon\in\mathbb{R}_{>0}$, there exists a positive $\delta(\epsilon)\in\mathbb{R}_{>0}$ (independent of $\bm{p}$) such that
		\begin{equation}
			\|x_0\|<\delta(\epsilon)\implies \|x_k(\bm{p},\bm{0},x_0)\|<\epsilon,
		\end{equation}
		for all $k\in\mathbb{N}_{\geqslant 1}$ and for all $\bm{p}\in\mathcal{P}$, and
		\item \label{item:def-guas-2} for any positive $\rho,\epsilon\in\mathbb{R}_{>0}$, there exists $K(\rho,\epsilon)\in\mathbb{N}$ (independent of $\bm{p}$) such that, for all $k>K(\rho,\epsilon)$ and for all $\bm{p}\in\mathcal{P}$,
		\begin{equation}
			\|x_0\|<\rho \implies \|x_k(\bm{p},\bm{0},x_0)\|<\epsilon.
		\end{equation}
	\end{enumerate}
\end{definition}\vspace*{0\topsep}
\noindent For guaranteeing GUAS of~\eqref{eq:system} it is often practical to use Lyapunov-based techniques. For polytopic systems~\eqref{eq:system}, poly-QLFs, as introduced below, have been used extensively, see, e.g.,~\cite{Daafouz2001,Heemels2010,Oliveira2022,Pereira2021}, because they \begin{enumerate*}[label=(\alph*)] \item can be constructed using LMI-based techniques, and \item are less conservative than traditional QLFs \end{enumerate*}. These poly-QLFs lead us to define poly-quadratic stability (poly-QS) below.
\begin{definition}\label{dfn:polyQS}
	The system~\eqref{eq:system} being polytopic is said to be poly-QS, if it admits a poly-QLF, i.e., a function $V\colon\mathbb{P}\times\mathbb{R}^{n_x}\rightarrow\mathbb{R}_{\geqslant 0}$ of the form
	\begin{equation}
		V(\pi,x) = x^\top P(\pi)x\text{ with }P(\pi) = \sum_{i\in\mathcal{N}}\xi_i(\pi)\bar{P}_i,
		\label{eq:poly-lyap}
	\end{equation}
	where $\bar{P}_i\in\mathbb{S}^{n_x}$ and $\xi_i$, $i\in\mathcal{N}$, are the functions in~\eqref{eq:poly-A} satisfying, for some $a_j\in\mathbb{R}_{>0}$, $j\in\mathbb{N}_{[1,3]}$,
	\begin{align}
		a_1\|x\|^2&\leqslant V(\pi,x)\leqslant a_2\|x\|^2,\label{eq:lyap-bounds}\\
		V(\pi_+,x_+)&\leqslant V(\pi,x)-a_3\|x\|^2,\label{eq:lyap-desc}
	\end{align}
	for all $\pi_+,\pi\in\mathbb{P}$ and $x_+,x\in\mathbb{R}^{n_x}$ with $x_+=A(\pi)x$.
\end{definition}
\noindent Poly-QS implies GUAS~\cite{Khalil1996-2nd-edition}, however, the converse is in general not true and, therefore, some degree of conservatism is introduced. The use of poly-QLFs is strongly related to the use of mode-dependent LFs for switched linear systems~\cite{Mason2007}.

\subsection{Relevant and novel LMI-based poly-QS conditions}
Several necessary and sufficient LMI-based conditions, which are summarized below, have been proposed in the literature to verify poly-QS, see, e.g.,~\cite{Daafouz2001,Heemels2010,Pandey2018}. We also propose a \textit{new} condition,~\ref{item:lem-equiv-pqs-4} below, with fewer decision variables and, thereby, lower computational complexity.
\begin{lemma}\label{lem:equiv-pqs}
	Consider the system~\eqref{eq:system} being polytopic and the statements:
	\begin{enumerate}[labelindent=0pt,labelwidth=\widthof{\ref{item:lem-equiv-pqs-4}},label=(L\ref{lem:equiv-pqs}.\arabic*),itemindent=0em,leftmargin=!]
		\item \label{item:lem-equiv-pqs-1} System~\eqref{eq:system} is poly-QS with~\eqref{eq:poly-lyap} being a poly-QLF.
		\item \label{item:lem-equiv-pqs-2} There exist matrices $\bar{P}_i\in\mathbb{S}^{n_x}$, $\bar{S}_i=\bar{P}_i^{-1}$ and $X_i\in\mathbb{R}^{n_x\times n_x}$, $i\in\mathcal{N}$, such that
	\begin{equation}
		\begin{bmatrix}\begin{smallmatrix}
			X_i + X_i^\top - \bar{S}_i & \star\\
			A_iX_i & \bar{S}_j
		\end{smallmatrix}\end{bmatrix}\succ 0,\text{ for all }i,j\in\mathcal{N}.
		\label{eq:lem-equiv-pqs-2}
	\end{equation}
		\item \label{item:lem-equiv-pqs-3} There exist matrices $\bar{P}_i\in\mathbb{S}^{n_x}$ and $X_i\in\mathbb{R}^{n_x\times n_x}$, $i\in\mathcal{N}$, such that
	\begin{equation}
		\begin{bmatrix}\begin{smallmatrix}
			X_i + X_i^\top - \bar{P}_j & \star\\
			A_i^\top X_i^\top & \bar{P}_i
		\end{smallmatrix}\end{bmatrix}\succ 0,\text{ for all }i,j\in\mathcal{N}.
		\label{eq:lem-equiv-pqs-3}
	\end{equation}
	\item \label{item:lem-equiv-pqs-4} There exist matrices $\bar{P}_i\in\mathbb{S}^{n_x}$ and $\bar{S}_i=\bar{P}_i^{-1}$, $i\in\mathcal{N}$, such that
	\begin{equation}	
		\begin{bmatrix}\begin{smallmatrix}
			\bar{S}_i & \star\\
			A_i\bar{S}_i & \bar{S}_j
		\end{smallmatrix}\end{bmatrix}\succ 0,\text{ for all }i,j\in\mathcal{N}.
		\label{eq:lem-equiv-pqs-4}
	\end{equation}
	\end{enumerate}
	It holds that $\ref{item:lem-equiv-pqs-4}\Leftrightarrow\ref{item:lem-equiv-pqs-2}\Rightarrow\ref{item:lem-equiv-pqs-1}$ and~$\ref{item:lem-equiv-pqs-3}\Rightarrow\ref{item:lem-equiv-pqs-1}$. The matrices $X_i$, $i\in\mathcal{N}$, satisfying~\eqref{eq:lem-equiv-pqs-2} or~\eqref{eq:lem-equiv-pqs-3} are non-singular. In addition, if system~\eqref{eq:system} is strictly polytopic, then all statements are equivalent.
\end{lemma}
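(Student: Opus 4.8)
The plan is to funnel every implication through the single ``vertex'' condition that there exist $\bar{P}_i\in\mathbb{S}^{n_x}_{\succ 0}$, $i\in\mathcal{N}$, with $\bar{P}_i-A_i^\top\bar{P}_jA_i\succ 0$ for all $i,j\in\mathcal{N}$, which I abbreviate $(\ast)$. A routine sequence of Schur complements and a congruence by $\operatorname{diag}\{\bar{S}_i,I\}$ shows that $(\ast)$ and \eqref{eq:lem-equiv-pqs-4} are equivalent, with the same $\bar{P}_i$ and $\bar{S}_i=\bar{P}_i^{-1}$, both being equivalent to $\bar{S}_j-A_i\bar{S}_iA_i^\top\succ 0$ for all $i,j$; and \eqref{eq:lem-equiv-pqs-4}$\Rightarrow$\eqref{eq:lem-equiv-pqs-2} follows from the choice $X_i=\bar{S}_i$, which makes $X_i+X_i^\top-\bar{S}_i=\bar{S}_i$. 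Next I would prove $(\ast)\Rightarrow\ref{item:lem-equiv-pqs-1}$ by taking $V$ as in \eqref{eq:poly-lyap}: \eqref{eq:lyap-bounds} holds with $a_1=\min_i\lambda_{\min}(\bar{P}_i)$ and $a_2=\max_i\lambda_{\max}(\bar{P}_i)$ because $\sum_i\xi_i(\pi)=1$, and for \eqref{eq:lyap-desc} I would combine matrix convexity of $M\mapsto M^\top P(\pi_+)M$, which gives $A(\pi)^\top P(\pi_+)A(\pi)\preccurlyeq\sum_i\xi_i(\pi)A_i^\top P(\pi_+)A_i=\sum_{i,j}\xi_i(\pi)\xi_j(\pi_+)A_i^\top\bar{P}_jA_i$, with $A_i^\top\bar{P}_jA_i\preccurlyeq\bar{P}_i-a_3I$ for $a_3=\min_{i,j}\lambda_{\min}(\bar{P}_i-A_i^\top\bar{P}_jA_i)>0$, and then use $\sum_i\xi_i(\pi)=\sum_j\xi_j(\pi_+)=1$ to collapse the double sum to $P(\pi)-a_3I$.

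To relate the slack-variable LMIs \eqref{eq:lem-equiv-pqs-2} and \eqref{eq:lem-equiv-pqs-3} to $(\ast)$ I would use two elementary moves, applied analogously to each. First, the $(1,1)$-block of either LMI is positive definite and equals such a block plus $\bar{S}_i$ --- respectively plus $\bar{P}_j$ --- which is itself positive definite (being a $(2,2)$-block of the same family of LMIs, with the indices renamed); hence $X_i+X_i^\top\succ 0$ and so $X_i$ is non-singular, which is the non-singularity claim. Second, a congruence by $\operatorname{diag}\{X_i^{-1},I\}$, respectively $\operatorname{diag}\{X_i^{-\top},I\}$, followed by the completion-of-squares estimate $Y+Y^\top-Y^\top MY\preccurlyeq M^{-1}$ --- valid for $M\succ 0$ because $(Y-M^{-1})^\top M(Y-M^{-1})\succcurlyeq 0$ --- applied with $(Y,M)=(X_i^{-1},\bar{S}_i)$, respectively $(X_i^{-\top},\bar{P}_j)$, reduces \eqref{eq:lem-equiv-pqs-2} to $\begin{bmatrix}\bar{P}_i & A_i^\top\\ A_i & \bar{S}_j\end{bmatrix}\succ 0$ and \eqref{eq:lem-equiv-pqs-3} to $\begin{bmatrix}\bar{S}_j & A_i\\ A_i^\top & \bar{P}_i\end{bmatrix}\succ 0$; a final Schur complement turns these into $\bar{S}_j-A_i\bar{S}_iA_i^\top\succ 0$ and $\bar{P}_i-A_i^\top\bar{P}_jA_i\succ 0$, each of which is $(\ast)$ (the former via the equivalence above). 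Together with the first paragraph this gives $\ref{item:lem-equiv-pqs-4}\Leftrightarrow\ref{item:lem-equiv-pqs-2}$, $\ref{item:lem-equiv-pqs-2}\Rightarrow\ref{item:lem-equiv-pqs-1}$ and $\ref{item:lem-equiv-pqs-3}\Rightarrow\ref{item:lem-equiv-pqs-1}$.

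For the strictly polytopic case, $\{\bm{e}_i\}_{i\in\mathcal{N}}\subset\xi(\mathbb{P})$ provides, for each $i$, a $\pi^{(i)}\in\mathbb{P}$ with $A(\pi^{(i)})=A_i$ and $P(\pi^{(i)})=\bar{P}_i$; instantiating \eqref{eq:lyap-bounds}--\eqref{eq:lyap-desc} at $\pi\leftarrow\pi^{(i)}$, $\pi_+\leftarrow\pi^{(j)}$ (with $x_+=A_ix$) yields $\bar{P}_i\succcurlyeq a_1I\succ 0$ and $\bar{P}_i-A_i^\top\bar{P}_jA_i\succcurlyeq a_3I\succ 0$, i.e.\ $(\ast)$, so $\ref{item:lem-equiv-pqs-1}\Rightarrow(\ast)$. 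The genuinely delicate step is the remaining $(\ast)\Rightarrow\ref{item:lem-equiv-pqs-3}$: one must exhibit a \emph{single} $X_i$ that makes \eqref{eq:lem-equiv-pqs-3} feasible \emph{for all} $j$ at once, and applying Finsler's lemma (Lemma~\ref{lem:finsler}) pairwise only yields a slack variable depending on $j$. I would instead guess, and check by direct substitution, that $X_i\coloneqq(R_i+c_iI)^{-1}$ works, where $R_i\coloneqq A_i\bar{S}_iA_i^\top\succcurlyeq 0$, $\delta_i\coloneqq\min_j\lambda_{\min}(\bar{S}_j-R_i)$ (positive by $(\ast)$), and $c_i\in(0,2\delta_i)$: the Schur complement of \eqref{eq:lem-equiv-pqs-3} with respect to its $\bar{P}_i$-block is $X_i+X_i^\top-X_iR_iX_i^\top\succ\bar{P}_j$, its left-hand side equals $(R_i+c_iI)^{-1}(R_i+2c_iI)(R_i+c_iI)^{-1}$ --- everything in sight commutes --- whose inverse is $R_i+c_i^2(R_i+2c_iI)^{-1}\preccurlyeq R_i+\tfrac{c_i}{2}I\prec R_i+\delta_iI\preccurlyeq\bar{S}_j$, which upon inverting back is exactly the desired inequality, while the $(1,1)$-block positivity is automatic since $X_iR_iX_i^\top\succcurlyeq 0$. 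Assembling everything, \ref{item:lem-equiv-pqs-1}--\ref{item:lem-equiv-pqs-4} become equivalent in the strictly polytopic case. I expect this explicit construction --- and carrying the commuting-matrix estimate through cleanly --- to be the only step requiring real care; the rest is Schur complements and convexity.
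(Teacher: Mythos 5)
Your proof is correct, but it takes a genuinely different route from the paper's. The paper only proves $\ref{item:lem-equiv-pqs-2}\Leftrightarrow\ref{item:lem-equiv-pqs-4}$ from scratch and delegates everything else---$\ref{item:lem-equiv-pqs-2}\Rightarrow\ref{item:lem-equiv-pqs-1}$, $\ref{item:lem-equiv-pqs-3}\Rightarrow\ref{item:lem-equiv-pqs-1}$, the non-singularity of the $X_i$, and the equivalences under strict polytopicity---to citations of~\cite{Daafouz2001,Heemels2010,Pandey2018}. You instead give a self-contained argument funneled through the vertex inequality $\bar{P}_i-A_i^\top\bar{P}_jA_i\succ 0$, and every step checks out: the matrix-convexity argument for $(\ast)\Rightarrow\ref{item:lem-equiv-pqs-1}$, the extraction of $(\ast)$ from~\eqref{eq:lem-equiv-pqs-2} and~\eqref{eq:lem-equiv-pqs-3} via the congruence $\operatorname{diag}\{X_i^{-1},I\}$ plus $Y+Y^\top-Y^\top MY\preccurlyeq M^{-1}$, and the instantiation at the vertices under strict polytopicity. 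Two points of comparison are worth noting. First, for $\ref{item:lem-equiv-pqs-2}\Rightarrow(\ast)$ the paper uses a slicker move---congruence with $\left[\begin{smallarray}{@{}c@{}}A_i^\top\\-I\end{smallarray}\right]$, which annihilates the $X_i$ terms in one stroke and directly yields $\bar{S}_j-A_i\bar{S}_iA_i^\top\succ 0$---whereas your completion-of-squares route needs the non-singularity of $X_i$ first; both are fine. Second, and more substantively, your explicit $j$-independent slack $X_i=(A_i\bar{S}_iA_i^\top+c_iI)^{-1}$ with $c_i\in(0,2\delta_i)$ for the necessity of~\eqref{eq:lem-equiv-pqs-3} is nowhere in the paper (it is precisely the step the paper outsources), and your commuting-matrix computation $\bigl(2X_i-X_iR_iX_i\bigr)^{-1}=R_i+c_i^2(R_i+2c_iI)^{-1}\preccurlyeq R_i+\tfrac{c_i}{2}I\prec\bar{S}_j$ is a clean, verifiable way to close that gap; it makes the lemma fully self-contained at the cost of some length. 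The only cosmetic issue is that the sides of your congruence transformations (whether $X_i^{-1}$ or $X_i^{-\top}$ multiplies on the left) should be pinned down when writing this up, but the underlying algebra is unambiguous and correct.
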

\noindent While mathematically equivalent (for strictly polytopic systems), the statement above have different applications, i.e., \ref{item:lem-equiv-pqs-2} and~\ref{item:lem-equiv-pqs-4} are useful to obtain LMI-based controller synthesis conditions, whereas~\ref{item:lem-equiv-pqs-3} is used for observer design, see~\cite{Daafouz2001,Heemels2010,Pandey2018}. Our \textit{novel} condition~\ref{item:lem-equiv-pqs-4} leads to controller synthesis conditions with reduced computational complexity, as we discuss in Section~\ref{sec:stabilizability-synth}.  

\section{Observer design and detectability}\label{sec:detectability}
In this section, we consider the problem of designing observers of the form
\begin{equation}
	\hat{x}_{k+1} = A(p_k)\hat{x}_k + L(k,\bm{p})C(\hat{x}_k-x_k) + Bu_k,
	\label{eq:observer}
\end{equation}
where $\hat{x}_k\in\mathbb{R}^{n_x}$ denotes the estimated state at time $k\in\mathbb{N}$ and $L\colon\mathbb{N}\times\mathcal{P}\rightarrow\mathbb{R}^{n_x\times n_y}$ is the to-be-designed observer gain. For the error $e_k\coloneqq \hat{x}_k-x_k$, $k\in\mathbb{N}$, we obtain the estimation-error system for~\eqref{eq:system} and~\eqref{eq:observer} that is linear in $e$, i.e.,
\begin{equation}
	e_{k+1} = (A(p_k)+ L(k,\bm{p})C)e_k.
	\label{eq:error-system}
\end{equation}
We are interested in analyzing--in a systematic fashion--whether a given system is detectable in the sense that an observer of the form~\eqref{eq:observer} exists, which is such that the corresponding error system~\eqref{eq:error-system} is GUAS. The class of observers in~\eqref{eq:observer} is more general than the class of observers considered in, e.g.,~\cite{Heemels2010} in that it depends on the entire parameter sequence $\bm{p}$ (rather than $p_k$ alone). This allows us to exploit a history or prediction of $\bm{p}$, if available, so that we can study whether this enables us to design observers, that render~\eqref{eq:error-system} GUAS, for systems for which we are not able to do so using $p_k$ alone. Also, we use the term poly-Q detectability to indicate that an observer~\eqref{eq:observer} exists that renders~\eqref{eq:error-system} poly-QS, as formalized below. Some degree of conservatism is introduced by considering poly-Q detectability instead of, for instance, a more general detectability definition in terms of GUAS.
\begin{definition}\label{dfn:poly-quadratic-detectability}
	The system~\eqref{eq:system} is said to be poly-Q detectable, if there exists a function $L\colon\mathbb{N}\times\mathcal{P}\rightarrow\mathbb{R}^{n_x\times n_y}$ that renders the error system~\eqref{eq:error-system} poly-QS.
\end{definition}\vspace*{-\topsep}

\subsection{Poly-Q-detectability analysis}
Next, we propose novel LMI-based conditions that generalize the Lyapunov-based detectability test~\eqref{eq:lti-det-lmi} for LTI systems.
\begin{theorem}\label{thm:poly-q-detectability-analysis}
	The system~\eqref{eq:system} being strictly polytopic is poly-Q detectable, if and only if there exist matrices $\bar{P}_i\in\mathbb{S}^{n_x}_{\succ 0}$, $i\in\mathcal{N}$, such that
	\begin{equation}
		\bar{P}_i-A_i^\top \bar{P}_jA_i+C^\top C\succ 0,\text{ for all }i,j\in\mathcal{N}.
		\label{eq:poly-q-detectability-analysis}
	\end{equation}
	If the system~\eqref{eq:system} is polytopic, the conditions are only sufficient. Moreover, in that case,~\eqref{eq:observer} with, for $k\in\mathbb{N}$ and $\bm{p}\in\mathcal{P}$,
	\begin{equation}
		L(k,\bm{p}) = -\sum_{i\in\mathcal{N}}\xi_i(p_k) A_i(\bar{P}_i+C^\top C)^{-1}C^\top
		\label{eq:poly-q-detectability-analysis-gain}
	\end{equation}	 
	renders the error system~\eqref{eq:error-system} poly-QS with poly-QLF~\eqref{eq:poly-lyap}.
\end{theorem}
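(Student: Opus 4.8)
The plan is to prove the two implications separately, obtaining the weaker claim for general polytopic systems as a byproduct of the sufficiency argument.

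\emph{Sufficiency (and the general polytopic case).} Assume $\bar P_i\in\mathbb{S}^{n_x}_{\succ 0}$, $i\in\mathcal{N}$, satisfy \eqref{eq:poly-q-detectability-analysis}. First I would substitute \eqref{eq:poly-q-detectability-analysis-gain} into \eqref{eq:error-system} and use \eqref{eq:poly-A} together with the identity $I-(\bar P_i+C^\top C)^{-1}C^\top C=(\bar P_i+C^\top C)^{-1}\bar P_i$ to rewrite the error dynamics as $e_{k+1}=\mathcal{A}(p_k)e_k$ with $\mathcal{A}(\pi)=\sum_{i\in\mathcal{N}}\xi_i(\pi)\mathcal{A}_i$ and $\mathcal{A}_i:=A_i(\bar P_i+C^\top C)^{-1}\bar P_i$. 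Rewriting \eqref{eq:poly-q-detectability-analysis} as $A_i^\top\bar P_jA_i\prec\bar P_i+C^\top C$ and applying a congruence transformation with $(\bar P_i+C^\top C)^{-1}\bar P_i$ then gives $\mathcal{A}_i^\top\bar P_j\mathcal{A}_i\prec\bar P_i(\bar P_i+C^\top C)^{-1}\bar P_i\preccurlyeq\bar P_i$, where the last step uses $\bar P_i+C^\top C\succcurlyeq\bar P_i\succ0$, hence $(\bar P_i+C^\top C)^{-1}\preccurlyeq\bar P_i^{-1}$. Thus $\mathcal{A}_i^\top\bar P_j\mathcal{A}_i\prec\bar P_i$ for all $i,j\in\mathcal{N}$. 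Taking $V$ and $P(\pi)=\sum_{i\in\mathcal{N}}\xi_i(\pi)\bar P_i$ as in \eqref{eq:poly-lyap}, the bounds \eqref{eq:lyap-bounds} follow from $\bar P_i\succ0$ and $\sum_{i\in\mathcal{N}}\xi_i=1$, while for \eqref{eq:lyap-desc} I would invoke the (matrix-)convexity of $M\mapsto M^\top\bar P_jM$ to get $\mathcal{A}(\pi)^\top P(\pi_+)\mathcal{A}(\pi)\preccurlyeq\sum_{i,j\in\mathcal{N}}\xi_i(\pi)\xi_j(\pi_+)\mathcal{A}_i^\top\bar P_j\mathcal{A}_i$ and combine this with $\mathcal{A}_i^\top\bar P_j\mathcal{A}_i\prec\bar P_i$ and the uniform margin $a_3=\min_{i,j\in\mathcal{N}}\lambda_{\min}(\bar P_i-\mathcal{A}_i^\top\bar P_j\mathcal{A}_i)>0$. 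This exhibits \eqref{eq:poly-lyap} as a poly-QLF for \eqref{eq:error-system}, i.e., poly-Q detectability; since only polytopicity was used, the same computation proves the ``only sufficient'' claim for polytopic systems.

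\emph{Necessity for strictly polytopic systems.} Suppose \eqref{eq:system} is poly-Q detectable, so some $L$ renders \eqref{eq:error-system} poly-QS; let $V$, $\bar P_i$ and constants $a_1,a_3>0$ be the associated poly-QLF data, so $P(\pi)=\sum_{i\in\mathcal{N}}\xi_i(\pi)\bar P_i\succcurlyeq a_1I$ for all $\pi\in\mathbb{P}$. By strict polytopicity, for each $i\in\mathcal{N}$ there is $\pi^{(i)}\in\mathbb{P}$ with $\xi(\pi^{(i)})=\bm e_i$, hence $A(\pi^{(i)})=A_i$ and $P(\pi^{(i)})=\bar P_i\succ0$. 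Fixing $i,j\in\mathcal{N}$, I would pick $\bm p\in\mathcal{P}$ with $p_0=\pi^{(i)}$ and $p_1=\pi^{(j)}$ and set $L_{ij}:=L(0,\bm p)$; the descent property of $V$ along \eqref{eq:error-system} evaluated at $k=0$ (valid for every $e_0\in\mathbb{R}^{n_x}$, since $e_0$ is a free initial condition) reads $(A_i+L_{ij}C)^\top\bar P_j(A_i+L_{ij}C)-\bar P_i\preccurlyeq-a_3I\prec0$. Restricting the corresponding quadratic form to $\operatorname{ker}C$, where the $L_{ij}$ terms vanish, yields $v^\top(\bar P_i-A_i^\top\bar P_jA_i)v>0$ for all nonzero $v\in\operatorname{ker}C$, i.e., $C_\perp^\top(\bar P_i-A_i^\top\bar P_jA_i)C_\perp\succ0$ for all $i,j\in\mathcal{N}$.

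\emph{From the kernel condition to \eqref{eq:poly-q-detectability-analysis}.} The remaining, and I expect most delicate, step is to upgrade these inequalities on $\operatorname{ker}C$ to the full inequalities \eqref{eq:poly-q-detectability-analysis}: for fixed matrices, $C_\perp^\top M_{ij}C_\perp\succ0$ with $M_{ij}:=\bar P_i-A_i^\top\bar P_jA_i$ does \emph{not} imply $M_{ij}+C^\top C\succ0$. My plan is to rescale, putting $\tilde P_i:=\varepsilon\bar P_i\succ0$, so that $\tilde P_i-A_i^\top\tilde P_jA_i+C^\top C=\varepsilon M_{ij}+C^\top C$, and to show this is positive definite for all sufficiently small $\varepsilon>0$ by a compactness argument on the unit sphere: if $\varepsilon_n\to0$ and unit vectors $v_n$ violated it, then $\|Cv_n\|^2\leqslant-\varepsilon_nv_n^\top M_{ij}v_n\to0$, and a subsequential limit $v^\ast\in\operatorname{ker}C$ would satisfy $v^{\ast\top}M_{ij}v^\ast>0$, contradicting $v_n^\top M_{ij}v_n\leqslant0$. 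Since there are finitely many pairs $(i,j)$, choosing $\varepsilon$ below all the corresponding thresholds makes $\tilde P_i-A_i^\top\tilde P_jA_i+C^\top C\succ0$ hold simultaneously, and the $\tilde P_i$ remain in $\mathbb{S}^{n_x}_{\succ0}$, which establishes \eqref{eq:poly-q-detectability-analysis} and closes the proof.
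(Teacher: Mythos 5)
Your proof is correct. The sufficiency half is essentially the paper's own argument in primal coordinates: the paper uses the same vertex gains $L_i=-A_i(\bar P_i+C^\top C)^{-1}C^\top$ and the same key bound $\bar P_i\preccurlyeq\bar P_i+C^\top C$, but states the resulting inequality dually as $\bar S_j-(A_i+L_iC)\bar S_i(A_i+L_iC)^\top\succ 0$ with $\bar S_i=\bar P_i^{-1}$ and closes by citing the implication $\ref{item:lem-equiv-pqs-4}\Rightarrow\ref{item:lem-equiv-pqs-1}$ of Lemma~\ref{lem:equiv-pqs}, whereas you prove that implication inline via matrix convexity of $M\mapsto M^\top\bar P_j M$ --- which works precisely because in $\mathcal{A}_i^\top\bar P_j\mathcal{A}_i\prec\bar P_i$ the outer index $i$ is decoupled from the inner index $j$. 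The necessity half is where you genuinely diverge. The paper keeps the parameter-dependent matrix inequality, eliminates $L$ with the projection lemma (Lemma~\ref{lem:proj-lem}), converts the resulting kernel condition into $P(\pi)-A^\top(\pi)P(\pi_+)A(\pi)+\mu C^\top C\succ 0$ with a single multiplier $\mu$ via Finsler's lemma on the compact set $\mathbb{P}\times\mathbb{P}$ (Lemma~\ref{lem:finsler}), then specializes to vertices and rescales by $1/\bar\mu$. You instead specialize to vertices immediately (legitimate under strict polytopicity), note that the gain terms vanish on $\operatorname{ker}C$ so that only the trivial ``only if'' direction of the projection lemma is needed, and replace Finsler's lemma by a finite compactness argument on the unit sphere that produces the scaling $\varepsilon$ directly; your $\tilde P_i=\varepsilon\bar P_i$ plays exactly the role of the paper's $\bar P_i\leftarrow(1/\bar\mu)\bar P_i$. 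Your route is more elementary and self-contained, needing neither Lemma~\ref{lem:proj-lem} nor Lemma~\ref{lem:finsler}; the paper's route additionally yields the inequality for all $\pi,\pi_+\in\mathbb{P}$ rather than only at the vertices, and reuses the same projection/Finsler template in the proof of Theorem~\ref{thm:poly-quadratic-stabilizability-nec-suff}, which is presumably why it is organized that way.
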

\noindent The conditions above provide a non-conservative test for poly-Q detectability of strictly polytopic systems~\eqref{eq:system}. Observe also that Theorem~\ref{thm:poly-q-detectability-analysis} is more general than the results in~\cite{Heemels2010,Pandey2018} in the sense that--unlike~\cite{Heemels2010,Pandey2018}--we do not require $L$ to depend only on the current parameter $p_k$ and be polytopic, but more importantly Theorem~\ref{thm:poly-q-detectability-analysis} is simpler and computationally more efficient than the observer synthesis conditions in, e.g.,~\cite[Theorem 2]{Heemels2010,Pandey2018} since the observer gain $L$ does not appear explicitly in~\eqref{eq:poly-q-detectability-analysis} and no slack variables were introduced.

Once we have obtained a feasible solution to~\eqref{eq:poly-q-detectability-analysis}, we can use it to construct the observer gain in~\eqref{eq:poly-q-detectability-analysis-gain} that renders~\eqref{eq:error-system} poly-QS. Interestingly, despite the fact that we do not impose this a-priori, $L$ in~\eqref{eq:poly-q-detectability-analysis-gain} is still polytopic and depends only on $p_k$. At first glance, readers may find the uncovered polytopic structure for $L$ rather logical, however, we will see that this is a surprising result in the next section, where we obtain a very different structure in the context of stabilizing feedback control. In the LTI context, we can use a solution to the LTI detectability test~\eqref{eq:lti-det-lmi} to construct (using the Woodbury identity~\cite{Golub1996}) the stabilizing observer gain~\cite{Duan2013}
\begin{equation}
    \bar{L} = -\bar{A}(\bar{P}+C^\top C)^{-1}C^\top = -\bar{A}\bar{S}C(I+C\bar{S}C^\top)^{-1},
    \label{eq:lti-det-gain}
\end{equation}
with $\bar{S}$ satisfying~\eqref{eq:lti-det-lmi}. In fact, if we substitute $P_i=\bar{P}\in\mathbb{S}_{\succ 0}^{n_x}$ and $A_i\in\mathbb{R}^{n_x\times n_x}$, $i\in\mathcal{N}$, in~\eqref{eq:poly-q-detectability-analysis} and~\eqref{eq:poly-q-detectability-analysis-gain}, we recover~\eqref{eq:lti-det-lmi} and~\eqref{eq:lti-det-gain}, respectively, which shows that Theorem~\ref{thm:poly-q-detectability-analysis} truly generalizes the well-known Lyapunov stabilizability results from the LTI setting to polytopic LPV systems~\eqref{eq:system}.

\begin{remark}\label{rem:heemels2010}
The fact that the conditions in Theorem~\ref{thm:poly-q-detectability-analysis} yield $L$ as in~\eqref{eq:poly-q-detectability-analysis}, i.e., polytopic and only dependent on $p_k$, also means that the conditions in~\cite{Heemels2010,Pandey2018} are necessary and sufficient for the existence of a more general $L$ as in~\eqref{eq:observer} that renders~\eqref{eq:error-system} poly-QS. In other words, the system~\eqref{eq:system} being (strictly) polytopic is poly-Q detectable, if (and only if) there exist matrices $\bar{P}_i\in\mathbb{S}^{n_x}$, $X_i\in\mathbb{R}^{n_x\times n_x}$ and $Y_i\in\mathbb{R}^{n_x\times n_y}$, $i\in\mathcal{N}$, such that
\begin{equation}
	\begin{bmatrix}\begin{smallmatrix}
		X_i+X_i^\top - \bar{P}_j & \star\\
		A_i^\top X_i^\top + C^\top Y_i^\top & \bar{P}_i
	\end{smallmatrix}\end{bmatrix}\succ 0,\text{ for all }i,j\in\mathcal{N}.
	\label{eq:cor}
\end{equation}
Moreover, in that case,~\eqref{eq:observer} with
\begin{equation}
	L(k,\bm{p}) = \sum_{i\in\mathcal{N}}\xi_i(p_k)X_i^{-1}Y_i,\quad k\in\mathbb{N}, \bm{p}\in\mathcal{P},
	\label{eq:cor-L}
\end{equation}
renders the error system~\eqref{eq:error-system} poly-QS with poly-QLF~\eqref{eq:poly-lyap}. The proof is omitted, but sufficiency is shown in~\cite{Pandey2018} and necessity (under strict polytopicity) follows from Theorem~\ref{thm:stab-synth} and the fact that the conditions in~\eqref{eq:cor} are necessary for the existence of a stabilizing observer~\eqref{eq:observer} with polytopic $L(\bm{p},k)=\bar{L}(p_k)$~\cite{Pandey2018}.
\end{remark}

\section{Controller design and stabilizability}\label{sec:stabilizability}
Next, we aim to design stabilizing state-feedback laws
\begin{equation}
	u_k = K(k,\bm{p})x_k\text{ for some }K\colon\mathbb{N}\times\mathcal{P}\rightarrow\mathbb{R}^{n_u\times n_x},
	\label{eq:fb-law}
\end{equation}
which are more general feedback laws than considered in~\cite{Daafouz2001} in the sense that $K$ is not necessarily polytopic and may depend on the entire parameter sequence $\bm{p}$. By considering~\eqref{eq:fb-law}, we ensure that the corresponding closed-loop system
\begin{equation}
	x_{k+1} = (A(p_k) + BK(k,\bm{p}))x_k,
	\label{eq:closed-loop}
\end{equation}
is--like~\eqref{eq:fb-law}--also linear in $x$. Since there may exist other poly-Q stabilizing controllers, focusing only on~\eqref{eq:fb-law} may come at the cost of being conservative. We are interested in systematically analyzing whether a given system~\eqref{eq:system} is stabilizable in the sense that a controller of the form~\eqref{eq:fb-law} exists that renders the closed-loop system~\eqref{eq:closed-loop} poly-QS, for which we will develop LMI-based analysis and synthesis conditions.
\begin{definition}\label{dfn:poly-q-stabilizability}
	The system~\eqref{eq:system} is said to be poly-Q stabilizable, if there exists a function $K\colon\mathbb{N}\times\mathcal{P}\rightarrow\mathbb{R}^{n_u\times n_x}$ that renders the closed-loop system~\eqref{eq:closed-loop} poly-QS.
\end{definition}\vspace*{-\topsep}

\subsection{Poly-quadratic-stabilizability analysis}
Inspired by Theorem~\ref{thm:poly-q-detectability-analysis}, which generalizes~\eqref{eq:lti-det-lmi} to polytopic LPV systems, we aim to generalize the Lyapunov-based stabilizability condition~\eqref{eq:lti-stab-lmi}. To this end, we first state parameter-dependent necessary and sufficient conditions below.
\begin{theorem}\label{thm:poly-quadratic-stabilizability-nec-suff}
    The system~\eqref{eq:system} being polytopic is poly-Q stabilizable, if and only if there exist matrices $\bar{S}_i\in\mathbb{S}^{n_x}_{\succ 0}$ and $\bar{P}_i=\bar{S}_i^{-1}$, $i\in\mathcal{N}$, such that $S(\pi)=P^{-1}(\pi)$, with $P$ as in~\eqref{eq:poly-lyap}, satisfies, for all $\pi_+,\pi\in\mathbb{P}$
    \begin{equation}
        S(\pi_+)-A(\pi)S(\pi)A(\pi) + BB^\top\succ 0.
        \label{eq:poly-quadratic-stabilizability-nec-suff}
    \end{equation}
    Moreover, in that case, 
	\begin{equation}
		K(k,\bm{p}) = -B^\top(S(p_{k+1})+BB^\top)^{-1}A(p_k),
		\label{eq:poly-q-stabilizability-analysis-gain}
	\end{equation}
    for $k\in\mathbb{N}$ and $\bm{p}\in\mathcal{P}$, renders the closed-loop system~\eqref{eq:closed-loop} poly-QS with poly-QLF~\eqref{eq:poly-lyap}.
\end{theorem}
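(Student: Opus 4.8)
The plan is to establish the equivalence by relating condition~\eqref{eq:poly-quadratic-stabilizability-nec-suff} to the descent inequality of a poly-QLF for the closed-loop system~\eqref{eq:closed-loop}, and then handle the two directions separately. For the ``if'' direction, I would take the matrices $\bar S_i$ satisfying~\eqref{eq:poly-quadratic-stabilizability-nec-suff} (which holds for all $\pi_+,\pi\in\mathbb P$ once it holds at the vertices, by convexity in $\pi_+$ and the fact that $A(\pi)S(\pi)A(\pi)^\top$ need not be affine but $S(\pi_+)$ is — here I would actually argue directly with $S(\pi_+)$ affine and treat $A(\pi)S(\pi)A(\pi)^\top+BB^\top$ as a fixed positive-definite matrix for each $\pi$). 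Define $K(k,\bm p)$ as in~\eqref{eq:poly-q-stabilizability-analysis-gain} and compute $A_{\mathrm{cl}}(\pi,\pi_+) \coloneqq A(\pi)+BK = A(\pi) - B B^\top (S(\pi_+)+BB^\top)^{-1} A(\pi) = \big(I - BB^\top(S(\pi_+)+BB^\top)^{-1}\big) A(\pi) = S(\pi_+)(S(\pi_+)+BB^\top)^{-1} A(\pi)$. The key algebraic step is then to verify that $P(\pi_+) - A_{\mathrm{cl}}^\top P(\pi_+) A_{\mathrm{cl}} \succ 0$, which after substituting $P=S^{-1}$ and the expression for $A_{\mathrm{cl}}$ reduces, via the matrix inversion / Woodbury identity and a Schur-complement manipulation, precisely to~\eqref{eq:poly-quadratic-stabilizability-nec-suff}. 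Concretely, I expect $P(\pi_+) - A_{\mathrm{cl}}^\top P(\pi_+) A_{\mathrm{cl}}$ to equal $A(\pi)^\top (S(\pi_+)+BB^\top)^{-1} \big(S(\pi_+) - A(\pi)S(\pi)A(\pi)^\top + BB^\top\big)(S(\pi_+)+BB^\top)^{-1} A(\pi) + (\text{a positive term})$ — or more cleanly, that the Schur complement of~\eqref{eq:poly-quadratic-stabilizability-nec-suff} with respect to a suitably bordered matrix is exactly the Lyapunov descent expression. This gives a poly-QLF~\eqref{eq:poly-lyap} (the bounds~\eqref{eq:lyap-bounds} follow from $\bar S_i\succ 0$ and compactness/continuity of $\xi$, strictness of the inequality giving a uniform $a_3$), hence poly-QS of~\eqref{eq:closed-loop}, hence poly-Q stabilizability.

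For the ``only if'' direction, suppose~\eqref{eq:system} is poly-Q stabilizable, so some $K(k,\bm p)$ renders~\eqref{eq:closed-loop} poly-QS with poly-QLF $V(\pi,x)=x^\top P(\pi)x$, $P(\pi)=\sum_i\xi_i(\pi)\bar P_i$. The descent inequality~\eqref{eq:lyap-desc} along~\eqref{eq:closed-loop} reads $(A(\pi)+BK)^\top P(\pi_+)(A(\pi)+BK) - P(\pi) \prec 0$ for the relevant $\pi,\pi_+$ and all realized $K$. I would now invoke a Finsler/projection-type argument (Lemma~\ref{lem:finsler} in the Appendix): the existence of \emph{some} $K$ with $(A(\pi)+BK)^\top P(\pi_+)(A(\pi)+BK) - P(\pi)\prec 0$ is equivalent to $B_\perp^\top\big(A(\pi)^\top P(\pi_+) A(\pi) - P(\pi)\big) B_\perp \prec 0$ together with a coupling condition, and dualizing (taking inverses, $S=P^{-1}$) this is equivalent to~\eqref{eq:poly-quadratic-stabilizability-nec-suff}. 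This is the standard ``eliminate the controller gain'' step: the inequality $\exists K:\ \mathrm{He}\big(\text{stuff} + B K \cdot \text{stuff}\big)\prec 0$ collapses to a condition not involving $K$, which after a Schur complement and inversion becomes $S(\pi_+) - A(\pi)S(\pi)A(\pi)^\top + BB^\top \succ 0$. I would be careful that this elimination must be done pointwise in $(\pi,\pi_+)$ but with $S$ of the prescribed polytopic form $S(\pi)=P^{-1}(\pi)=(\sum_i\xi_i(\pi)\bar P_i)^{-1}$, so that the $\bar S_i=\bar P_i^{-1}$ recovered are exactly the same vertex matrices — consistency here is what ties the parameter-dependent condition back to a statement about the $\bar S_i$.

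The main obstacle I anticipate is the elimination step in the ``only if'' direction, and in particular ensuring that the poly-QLF supplied by poly-Q stabilizability can be used with the \emph{same} $P(\pi)$ on both sides despite $K$ potentially depending on the whole sequence $\bm p$ rather than just $p_k$: since the Lyapunov inequality~\eqref{eq:lyap-desc} is required for all admissible $(\pi,\pi_+)$ independently, the gain $K(k,\bm p)$ at a given step only ever needs to dominate the pair $(p_k,p_{k+1})$, so one can fix $(\pi,\pi_+)$, read off the single matrix $K$ used there, and apply Finsler's lemma for that pair — the dependence on the rest of $\bm p$ is irrelevant. A secondary technical point is the passage from vertex inequalities to all $\pi\in\mathbb P$ and back (convex combinations, continuity of $\xi_i$, and uniform positivity), which is routine given the hypotheses but needs the continuity footnote invoked in Definition~\ref{dfn:polytopic}. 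The formula~\eqref{eq:poly-q-stabilizability-analysis-gain} for $K$ then follows by reading off the minimizing/completing-the-square choice in the ``if'' computation, exactly paralleling the LTI gain~\eqref{eq:lti-det-gain} and its detectability analogue~\eqref{eq:poly-q-detectability-analysis-gain}.
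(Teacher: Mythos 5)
Your proposal follows essentially the same route as the paper: the same gain~\eqref{eq:poly-q-stabilizability-analysis-gain} and the same rewrite $A(\pi)+BK=S(\pi_+)(S(\pi_+)+BB^\top)^{-1}A(\pi)$ followed by a completing-the-square/Schur-complement bound for sufficiency; and for necessity the same chain of linearizing~\eqref{eq:lyap-desc} by Schur complement, eliminating $K$ pointwise via the projection lemma (Lemma~\ref{lem:proj-lem}), and reintroducing the $BB^\top$ term via Lemma~\ref{lem:finsler}. Your resolution of the ``$K$ depends on all of $\bm{p}$'' issue (fix the pair $(p_k,p_{k+1})$, read off the single matrix used there, eliminate pointwise) is exactly the paper's. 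Two caveats. First, the parenthetical in your sufficiency direction --- that~\eqref{eq:poly-quadratic-stabilizability-nec-suff} at the vertices implies it for all $\pi_+,\pi\in\mathbb{P}$ ``by convexity'' because $S(\pi_+)$ is affine --- is wrong: $S(\pi_+)=P^{-1}(\pi_+)$ is the inverse of an affine matrix function, not affine itself, and the paper explicitly stresses that the vertex condition~\eqref{eq:poly-q-stabilizability-analysis-nec} is necessary but not known to be sufficient (this is precisely why Theorem~\ref{thm:poly-q-stabilizability-analysis} needs slack variables). The theorem's hypothesis is already the full parameter-dependent condition, so your main argument never uses this aside; it should simply be deleted. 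Second, in the necessity direction the Finsler step yields $S(\pi_+)+\mu BB^\top-A(\pi)S(\pi)A^\top(\pi)\succ 0$ for \emph{some} $\mu\in\mathbb{R}$, not with coefficient one; you still need the normalization $\bar{P}_i\leftarrow\bar{\mu}\bar{P}_i$ with $\bar{\mu}>\max\{0,\mu\}$, using $BB^\top\succcurlyeq 0$ and the fact that positive scaling preserves both the poly-QLF structure and $\bar{S}_i=\bar{P}_i^{-1}$, to land exactly on~\eqref{eq:poly-quadratic-stabilizability-nec-suff} (the paper does this by reference to the end of the proof of Theorem~\ref{thm:poly-q-detectability-analysis}). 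A minor typo: the descent quantity is $P(\pi)-A_{\mathrm{cl}}^\top P(\pi_+)A_{\mathrm{cl}}$, not $P(\pi_+)-A_{\mathrm{cl}}^\top P(\pi_+)A_{\mathrm{cl}}$.
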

\noindent Solving the conditions in Theorem~\ref{thm:poly-quadratic-stabilizability-nec-suff} for all $\pi_+,\pi\in\mathbb{P}$ is not tractable, which is why we typically attempt to translate them to conditions in terms of the vertices. It follows trivially from Theorem~\ref{thm:poly-quadratic-stabilizability-nec-suff} that, for strictly polytopic systems~\eqref{eq:system}, the existence of $\bar{S}_i\in\mathbb{S}^{n_x}_{\succ 0}$, $i\in\mathcal{N}$, such that 
\begin{equation}
    \bar{S}_j-A_i\bar{S}_iA_i^\top + BB^\top \succ 0,\text{ for all }i,j\in\mathcal{N},
    \label{eq:poly-q-stabilizability-analysis-nec}
\end{equation}
is a \emph{necessary} condition for poly-Q stabilizability. Unfortunately,~\eqref{eq:poly-q-stabilizability-analysis-nec} is--to the best of our knowledge--\emph{not sufficient}, since we are unable to derive, from~\eqref{eq:poly-q-stabilizability-analysis-nec}, a condition in terms of $\bar{P}_i$ (instead of $\bar{S}_i$) that also does not contain any products between matrices that depend on the same index $i$ or $j$, which is crucial in proving sufficiency of~\eqref{eq:poly-q-stabilizability-analysis-nec}. By introducing additional slack variables in Theorem~\ref{thm:poly-quadratic-stabilizability-nec-suff}, however, we obtain the conditions below, which are \emph{sufficient}.
\begin{theorem}\label{thm:poly-q-stabilizability-analysis}
	The system~\eqref{eq:system} being polytopic is poly-Q stabilizable, if there exist matrices $\bar{S}_i\in\mathbb{S}^{n_x}$ and $X_{i}\in\mathbb{R}^{n_x\times n_x}$, $\bar{P}_i=\bar{S}_i^{-1}$, $i\in\mathcal{N}$, such that
	\begin{equation}
		\begin{bmatrix}\begin{smallmatrix}
			X_i+X_i^\top -A_i\bar{S}_iA_i^\top + BB^\top & \star\\
			X_i & \bar{S}_j
		\end{smallmatrix}\end{bmatrix}\succ 0,\text{ for all }i,j\in\mathcal{N}.
		\label{eq:poly-q-stabilizability-analysis}
	\end{equation}
	Moreover, in that case, $K$ in~\eqref{eq:poly-q-stabilizability-analysis-gain} with $P$ in~\eqref{eq:poly-lyap} and $S(\pi)=P^{-1}(\pi)$ renders the closed-loop system~\eqref{eq:closed-loop} poly-QS with poly-QLF~\eqref{eq:poly-lyap}.
\end{theorem}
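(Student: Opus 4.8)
Proof plan for Theorem~\ref{thm:poly-q-stabilizability-analysis}.

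\medskip
\noindent\textbf{Strategy.} The plan is to show that~\eqref{eq:poly-q-stabilizability-analysis} implies the parameter-dependent condition~\eqref{eq:poly-quadratic-stabilizability-nec-suff} of Theorem~\ref{thm:poly-quadratic-stabilizability-nec-suff}, for the specific $S(\pi)=P^{-1}(\pi)$ with $P(\pi)=\sum_{i\in\mathcal N}\xi_i(\pi)\bar S_i^{-1}$ as in~\eqref{eq:poly-lyap}. Once this is done, Theorem~\ref{thm:poly-quadratic-stabilizability-nec-suff} delivers at once both poly-Q stabilizability and the claim that the controller~\eqref{eq:poly-q-stabilizability-analysis-gain} renders~\eqref{eq:closed-loop} poly-QS with poly-QLF~\eqref{eq:poly-lyap}, so no separate closed-loop computation is needed. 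First I would note that each $\bar S_j$ is a principal block of the positive-definite matrix in~\eqref{eq:poly-q-stabilizability-analysis}, hence $\bar S_j\succ0$, so $\bar P_j=\bar S_j^{-1}$ is well defined, $P(\pi)\succ0$ and $S(\pi)$ exists.

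\medskip
\noindent\textbf{Key steps.} Fix $\pi_+,\pi\in\mathbb P$ and write $\mu=\xi(\pi)$, $\nu=\xi(\pi_+)\in\mathbb X$, and $X(\mu)=\sum_i\mu_iX_i$. I would prepare two elementary facts. (i) A Jensen-type bound: since the Schur complement of $\bar S_i^{-1}$ in $\begin{bmatrix}\begin{smallmatrix}A_i\bar S_iA_i^\top & A_i\\ A_i^\top & \bar S_i^{-1}\end{smallmatrix}\end{bmatrix}$ vanishes, this matrix is positive semi-definite; taking its $\mu$-weighted sum over $i$ (whose $(2,2)$ block is $\sum_i\mu_i\bar S_i^{-1}=P(\pi)\succ0$) and applying a Schur complement gives $\sum_i\mu_iA_i\bar S_iA_i^\top\succcurlyeq A(\pi)S(\pi)A(\pi)^\top$. (ii) A completion-of-squares identity: for any square $G$ and any $N\succ0$, writing $M:=G+G^\top-N$, one has $GN^{-1}G^\top-M=(G-N)N^{-1}(G-N)^\top\succcurlyeq0$, i.e.\ $GN^{-1}G^\top\succcurlyeq M$. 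Then I would proceed as follows. Take the $\mu$-weighted sum of~\eqref{eq:poly-q-stabilizability-analysis} over $i$ for each fixed $j$, and use (i) to enlarge the $(1,1)$ block; with $W:=X(\mu)+X(\mu)^\top-A(\pi)S(\pi)A(\pi)^\top$ this yields $\begin{bmatrix}\begin{smallmatrix}W+BB^\top & \star\\ X(\mu) & \bar S_j\end{smallmatrix}\end{bmatrix}\succ0$ for all $j\in\mathcal N$, in particular $W+BB^\top\succ0$. A Schur complement w.r.t.\ the $(1,1)$ block gives $\bar S_j\succ Z:=X(\mu)(W+BB^\top)^{-1}X(\mu)^\top$ for all $j$; by strictness and finiteness of $\mathcal N$ there is $\varepsilon>0$ with $\bar S_j\succcurlyeq Z+\varepsilon I\succ0$, hence $\bar S_j^{-1}\preccurlyeq(Z+\varepsilon I)^{-1}$, hence $S(\pi_+)^{-1}=\sum_j\nu_j\bar S_j^{-1}\preccurlyeq(Z+\varepsilon I)^{-1}$, i.e.\ $S(\pi_+)\succcurlyeq Z+\varepsilon I\succ Z$. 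Finally, (ii) with $G=X(\mu)$ and $N=W+BB^\top$ (so $M=A(\pi)S(\pi)A(\pi)^\top-BB^\top$) gives $Z\succcurlyeq A(\pi)S(\pi)A(\pi)^\top-BB^\top$, whence $S(\pi_+)+BB^\top\succ Z+BB^\top\succcurlyeq A(\pi)S(\pi)A(\pi)^\top$, which is exactly~\eqref{eq:poly-quadratic-stabilizability-nec-suff}. Since $\pi_+,\pi$ were arbitrary, invoking Theorem~\ref{thm:poly-quadratic-stabilizability-nec-suff} completes the proof.

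\medskip
\noindent\textbf{Main obstacle.} The delicate point is obtaining $S(\pi_+)=P^{-1}(\pi_+)$---rather than the convex combination $\sum_j\nu_j\bar S_j$---in the descent inequality. Naively summing~\eqref{eq:poly-q-stabilizability-analysis} over $j$ with weights $\nu_j$ puts $\sum_j\nu_j\bar S_j$ in the $(2,2)$ position, which is useless here because $P^{-1}(\pi_+)=(\sum_j\nu_j\bar S_j^{-1})^{-1}\preccurlyeq\sum_j\nu_j\bar S_j$: the resulting bound points the wrong way (this is essentially the same obstruction that makes~\eqref{eq:poly-q-stabilizability-analysis-nec} only necessary). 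The remedy is to eliminate the index $j$ \emph{first}, via a Schur complement, turning~\eqref{eq:poly-q-stabilizability-analysis} into a $j$-uniform \emph{strict} lower bound $\bar S_j\succ Z$, and only then to invert each $\bar S_j$ individually and recombine with the weights $\nu_j$; strictness is what lets the $\varepsilon I$ slack absorb the loss. A secondary subtlety, worth flagging, is that $X(\mu)$ need not be invertible, so the completion-of-squares step must be used in the form $GN^{-1}G^\top\succcurlyeq G+G^\top-N$, valid for arbitrary (possibly singular) $G$, rather than by conjugating with $G^{-1}$ as is customary in Daafouz--Bernussou-type arguments.
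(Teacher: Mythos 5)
Your proof is correct, and it shares the paper's high-level strategy---establish the parameter-dependent inequality~\eqref{eq:poly-quadratic-stabilizability-nec-suff} for $S(\pi)=P^{-1}(\pi)$ and then invoke Theorem~\ref{thm:poly-quadratic-stabilizability-nec-suff}---but the route you take to that inequality differs in its mechanics. The paper first performs a congruence with $\operatorname{diag}\{I,\bar{P}_j\}$ and a Schur-complement lift to a $3\times 3$ LMI in which every product involves matrices of \emph{different} indices, so that a double convex sum over $i$ and $j$ directly produces $P(\pi)$, $A(\pi)$, $X(\pi)$ and $P(\pi_+)$; Schur-complementing back down and applying Lemma~\ref{lem:youngs} (with $Y=I$) to eliminate $X(\pi)$ then yields~\eqref{eq:poly-quadratic-stabilizability-nec-suff}. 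You instead sum only over $i$, absorb the same-index product $\sum_i\mu_iA_i\bar{S}_iA_i^\top$ via the joint-convexity (Jensen) bound $\sum_i\mu_iA_i\bar{S}_iA_i^\top\succcurlyeq A(\pi)S(\pi)A(\pi)^\top$ (proved by the same Schur mechanism the paper hides in its $3\times 3$ lift), and handle the $j$-index by deriving a $j$-uniform strict bound $\bar{S}_j\succ Z$, inverting each $\bar{S}_j$ with an $\varepsilon$-slack, and recombining to get $S(\pi_+)\succ Z$; your completion-of-squares step is exactly Lemma~\ref{lem:youngs} in disguise. Both arguments are sound; the paper's version is more compact and stays entirely within LMI manipulations amenable to the double convex sum, whereas yours makes the role of operator antitonicity of the inverse and of the strictness of~\eqref{eq:poly-q-stabilizability-analysis} explicit, and correctly flags that $X(\mu)$ need not be invertible (which is also why the paper uses Lemma~\ref{lem:youngs} rather than a congruence with $X^{-1}$).
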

\noindent Theorem~\ref{thm:poly-q-stabilizability-analysis} provides LMI-based conditions to guarantee that a given polytopic system~\eqref{eq:system} is poly-Q stabilizable. Observe that, in contrast with~\eqref{eq:poly-q-detectability-analysis-gain} in the context of poly-Q detectability, $K$ in~\eqref{eq:poly-q-stabilizability-analysis-gain} is not polytopic and depends on both $p_k$ and $p_{k+1}$. As mentioned before in Section~\ref{sec:detectability}, this emphasizes that the polytopic and $p_k$-dependent structure that we found for $L$ in~\eqref{eq:poly-q-detectability-analysis-gain} is not necessarily natural and/or obvious. 

By substituting $A(\pi) = \bar{A}\in\mathbb{R}^{n_x\times n_x}$ and $S(\pi)=\bar{S}=\bar{P}^{-1}\in\mathbb{S}^{n_x}_{\succ 0}$, $\pi\in\mathbb{P}$, in~\eqref{eq:poly-q-stabilizability-analysis-gain}, we recover (using the Woodbury identity~\cite{Golub1996})
\begin{equation}
	\bar{K} = -B^\top(\bar{S}+BB^\top)^{-1}\bar{A} = -(I+B^\top\bar{P}B)^{-1}\bar{P}\bar{A},
\end{equation}
with $\bar{P}=\bar{S}^{-1}\succ 0$ satisfying~\eqref{eq:lti-stab-lmi}, which is precisely the stabilizing controller gain obtained from~\eqref{eq:lti-stab-lmi} in the LTI setting, see, e.g.,~\cite{Duan2013}. Similarly, the conditions in Theorem~\ref{thm:poly-quadratic-stabilizability-nec-suff} reduce to those in~\eqref{eq:lti-stab-lmi} when making the same substitutions and, therefore, generalize~\eqref{eq:lti-stab-lmi} to the polytopic LPV setting.

\subsection{Poly-Q-stabilizing controller synthesis}\label{sec:stabilizability-synth}
In this section, we provide synthesis conditions, which differ from Theorem~\ref{thm:poly-q-stabilizability-analysis} in that they explicitly contain the controller gains, to design controllers of the form~\eqref{eq:fb-law}. 
\begin{theorem}\label{thm:stab-synth}
	Consider the system~\eqref{eq:system} being polytopic and the statements:
	\begin{enumerate}[labelindent=0pt,labelwidth=\widthof{\ref{item:thm-stab-synth-4}},label=(T\ref{thm:stab-synth}.\arabic*),itemindent=0em,leftmargin=!]
		\item \label{item:thm-stab-synth-1} System~\eqref{eq:system} is poly-Q stabilizable with $K(k,\bm{p})=\sum_{i,j\in\mathcal{N}}\xi_i(p_k)\xi_j(p_{k+1})\bar{K}_{ij}(\sum_{m\in\mathcal{N}}\xi_m(p_{k+1})X_{im})^{-1}$ for some $\bar{K}_{ij}\in\mathbb{R}^{n_u\times n_x}$, $X_{ij}\in\mathbb{R}^{n_x\times n_x}$, $i,j\in\mathcal{N}$.
		\item \label{item:thm-stab-synth-2} System~\eqref{eq:system} is poly-Q stabilizable with $K(k,\bm{p})=\sum_{i\in\mathcal{N}}\xi_i(p_k)\bar{K}_i$ for some $\bar{K}_i\in\mathbb{R}^{n_u\times n_x}$, $i\in\mathcal{N}$.
		\item \label{item:thm-stab-synth-3} There exist matrices $\bar{P}_i\in\mathbb{S}^{n_x}$, $\bar{S}_i=\bar{P}_i^{-1}$, $\bar{K}_i\in\mathbb{R}^{n_u\times n_x}$ and $Y_i=\bar{K}_i\bar{S}_i$, $i\in\mathcal{N}$, such that
	\begin{equation}	
		\begin{bmatrix}\begin{smallmatrix}
			\bar{S}_i & \star\\
			A_i\bar{S}_i+BY_i & \bar{S}_j
		\end{smallmatrix}\end{bmatrix}\succ 0,\text{ for all }i,j\in\mathcal{N}.
		\label{eq:thm-stab-synth-3}
	\end{equation}
		\item \label{item:thm-stab-synth-4} There exist matrices $\bar{P}_i\in\mathbb{S}^{n_x}$, $\bar{S}_i=\bar{P}_i^{-1}$, $X_{ij}\in\mathbb{R}^{n_x\times n_x}$, $\bar{K}_{ij}\in\mathbb{R}^{n_u\times n_x}$, $Y_{ij}=\bar{K}_{ij}X_{ij}$ and $Z_i\in\mathbb{R}^{n_x\times n_x}$, $i,j\in\mathcal{N}$, such that
		\begin{equation}
			\begin{bmatrix}\begin{smallmatrix}
				X_{ij}+X_{ij}^\top - \bar{S}_i & \star & \star\\
				A_iX_{ij} + BY_{ij} & Z_i+Z_i^\top & \star\\
				0 & Z_i & \bar{S}_j
			\end{smallmatrix}\end{bmatrix}\succ 0,\text{ for all }i,j\in\mathcal{N}.
			\label{eq:thm-stab-synth-4}
		\end{equation}
	\end{enumerate}
	Then, $\ref{item:thm-stab-synth-2}\Rightarrow\ref{item:thm-stab-synth-1}$ with $\bar{K}_{ij}=\bar{K}_i$ and $X_{ij}=I$, $i,j\in\mathcal{N}$. Moreover, $\ref{item:thm-stab-synth-3}\Rightarrow\ref{item:thm-stab-synth-2}$ with the same $\bar{K}_i$, $i\in\mathcal{N}$, and $\ref{item:thm-stab-synth-4}\Rightarrow\ref{item:thm-stab-synth-1}$ with the same $\bar{K}_{ij}$ and $X_{ij}$, $i,j\in\mathcal{N}$. If~\ref{item:thm-stab-synth-3} or~\ref{item:thm-stab-synth-4} holds,~\eqref{eq:poly-lyap} is a poly-QLF for the closed-loop system~\eqref{eq:closed-loop}. The matrices $X_{ij}$, $i,j\in\mathcal{N}$, satisfying~\eqref{eq:thm-stab-synth-4} are such that $X_i(\pi)=\sum_{j\in\mathcal{N}}\xi_j(\pi)X_{ij}$, $i\in\mathcal{N}$, are non-singular for all $\pi\in\mathbb{P}$. If~\eqref{eq:system} is also strictly polytopic, then $\ref{item:thm-stab-synth-2}\Leftrightarrow\ref{item:thm-stab-synth-3}$.
\end{theorem}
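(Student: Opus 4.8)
The plan is to prove the four items in the order $\ref{item:thm-stab-synth-2}\Rightarrow\ref{item:thm-stab-synth-1}$, $\ref{item:thm-stab-synth-3}\Rightarrow\ref{item:thm-stab-synth-2}$, $\ref{item:thm-stab-synth-4}\Rightarrow\ref{item:thm-stab-synth-1}$, then the non-singularity and poly-QLF claims, and finally the equivalence $\ref{item:thm-stab-synth-2}\Leftrightarrow\ref{item:thm-stab-synth-3}$ under strict polytopicity. For $\ref{item:thm-stab-synth-2}\Rightarrow\ref{item:thm-stab-synth-1}$ I would simply substitute $\bar K_{ij}=\bar K_i$ and $X_{ij}=I$: since $\sum_{j\in\mathcal N}\xi_j(p_{k+1})=1$, the parameterization in $\ref{item:thm-stab-synth-1}$ collapses to $\sum_{i\in\mathcal N}\xi_i(p_k)\bar K_i$. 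For $\ref{item:thm-stab-synth-3}\Rightarrow\ref{item:thm-stab-synth-2}$ I would observe that, with $Y_i=\bar K_i\bar S_i$, the left-hand side of \eqref{eq:thm-stab-synth-3} is exactly the matrix in \eqref{eq:lem-equiv-pqs-4} with $A_i$ replaced by $A_i+B\bar K_i$; under $u_k=\sum_{i\in\mathcal N}\xi_i(p_k)\bar K_ix_k$ the closed loop \eqref{eq:closed-loop} is a polytopic system with vertex matrices $A_i+B\bar K_i$ and the same interpolation functions $\xi_i$, so Lemma~\ref{lem:equiv-pqs} ($\ref{item:lem-equiv-pqs-4}\Rightarrow\ref{item:lem-equiv-pqs-1}$) yields that it is poly-QS with poly-QLF \eqref{eq:poly-lyap}. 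This also settles the poly-QLF claim in the $\ref{item:thm-stab-synth-3}$ case.

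For $\ref{item:thm-stab-synth-4}$, I would first record that taking $j=i$ in \eqref{eq:thm-stab-synth-4} gives $\bar S_i\succ 0$ and the $(1,1)$-block gives $X_{ij}+X_{ij}^\top\succ\bar S_i\succ 0$, so each $X_{ij}$ is non-singular; consequently $X_i(\pi)+X_i(\pi)^\top=\sum_{j\in\mathcal N}\xi_j(\pi)(X_{ij}+X_{ij}^\top)\succ 0$ for all $\pi\in\mathbb P$ as a convex combination of positive definite matrices, so $X_i(\pi)\coloneqq\sum_{j\in\mathcal N}\xi_j(\pi)X_{ij}$ is non-singular. Next I would eliminate the slack variables in two steps. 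Schur-complementing \eqref{eq:thm-stab-synth-4} on the $(3,3)$-block $\bar S_j$ and using the completion of squares $Z_i+Z_i^\top-Z_i^\top\bar P_jZ_i\preccurlyeq\bar S_j$ (namely $\bar S_j-(Z_i+Z_i^\top-Z_i^\top\bar P_jZ_i)=(Z_i-\bar S_j)^\top\bar P_j(Z_i-\bar S_j)\succcurlyeq 0$) removes $Z_i$ and leaves $\begin{bmatrix}\begin{smallmatrix}X_{ij}+X_{ij}^\top-\bar S_i & \star\\ A_iX_{ij}+BY_{ij} & \bar S_j\end{smallmatrix}\end{bmatrix}\succ 0$. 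A congruence by $\operatorname{diag}(X_{ij}^{-\top},I)$ turns the $(2,1)$-block into $A_i+B\bar K_{ij}$ (using $Y_{ij}=\bar K_{ij}X_{ij}$), and the completion of squares $X_{ij}^{-\top}+X_{ij}^{-1}-X_{ij}^{-\top}\bar S_iX_{ij}^{-1}\preccurlyeq\bar P_i$ bounds the $(1,1)$-block; a final Schur complement then yields the vertex inequalities $(A_i+B\bar K_{ij})^\top\bar P_j(A_i+B\bar K_{ij})\prec\bar P_i$ for all $i,j\in\mathcal N$.

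To pass from these vertex inequalities to the closed loop, note that with the controller of $\ref{item:thm-stab-synth-1}$ the closed-loop matrix factors as $A_{\mathrm{cl}}(\pi,\pi_+)=\sum_{i\in\mathcal N}\xi_i(\pi)\hat A_i(\xi(\pi_+))$ with $\hat A_i(\mu)=A_i+B\big(\sum_{j}\mu_j\bar K_{ij}\big)X_i(\mu)^{-1}$, which is just bookkeeping using $\sum_j\mu_j=1$ and the non-singularity of $X_i(\mu)$. I would then establish the poly-QLF descent $A_{\mathrm{cl}}(\pi,\pi_+)^\top P(\pi_+)A_{\mathrm{cl}}(\pi,\pi_+)\prec P(\pi)$, with $P$ as in \eqref{eq:poly-lyap} and $\bar P_i=\bar S_i^{-1}$, by a two-stage convexity argument: matrix convexity of $M\mapsto M^\top P(\pi_+)M$ (valid since $P(\pi_+)\succ 0$) reduces the inequality, over the current-parameter simplex, to $\hat A_i(\xi(\pi_+))^\top P(\pi_+)\hat A_i(\xi(\pi_+))\prec\bar P_i$ for each $i$; and a Jensen/matrix Cauchy--Schwarz estimate in the next parameter (underlain by $\sum_{j}\mu_j\begin{bmatrix}\begin{smallmatrix}M_j^\top\bar P_jM_j & \star\\ \bar P_jM_j & \bar P_j\end{smallmatrix}\end{bmatrix}\succcurlyeq 0$) reduces this, in turn, to the vertex inequalities $(A_i+B\bar K_{ij})^\top\bar P_j(A_i+B\bar K_{ij})\prec\bar P_i$ obtained above. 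Continuity of the $\xi_i$ on the compact set $\mathbb P$ upgrades the strict inequality to the uniform descent \eqref{eq:lyap-desc}, which together with \eqref{eq:lyap-bounds} gives poly-QS of \eqref{eq:closed-loop}; this is $\ref{item:thm-stab-synth-1}$ and the remaining poly-QLF claim.

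Finally, for $\ref{item:thm-stab-synth-2}\Leftrightarrow\ref{item:thm-stab-synth-3}$ under strict polytopicity, ``$\Leftarrow$'' is the step above, and for ``$\Rightarrow$'' I would note that if $\ref{item:thm-stab-synth-2}$ holds, then \eqref{eq:closed-loop} under $u_k=\sum_i\xi_i(p_k)\bar K_ix_k$ is a \emph{strictly} polytopic system (vertex matrices $A_i+B\bar K_i$, the same $\xi_i$, hence $\{\bm e_i\}_{i\in\mathcal N}\subset\xi(\mathbb P)$) that is poly-QS, so Lemma~\ref{lem:equiv-pqs} applied to it yields $\ref{item:lem-equiv-pqs-4}$, i.e.\ matrices $\bar S_i=\bar P_i^{-1}$ satisfying \eqref{eq:lem-equiv-pqs-4} with $A_i\leftarrow A_i+B\bar K_i$; setting $Y_i\coloneqq\bar K_i\bar S_i$ gives \eqref{eq:thm-stab-synth-3}. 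The main obstacle is clearly the step $\ref{item:thm-stab-synth-4}\Rightarrow\ref{item:thm-stab-synth-1}$: because the closed loop depends on \emph{two} consecutive parameter values (rather than on $p_k$ alone as in \cite{Daafouz2001}, or on nothing as in the LTI case), the ordinary single-parameter vertex argument does not apply directly, and the slack variables $X_{ij}$, $Z_i$ together with the rational-in-$\xi(p_{k+1})$ controller structure are exactly what is needed to make a carefully ordered two-stage convexification go through; getting the index bookkeeping and the order of the Schur-complement and congruence operations right is the delicate part.
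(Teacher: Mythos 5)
Your handling of $\ref{item:thm-stab-synth-2}\Rightarrow\ref{item:thm-stab-synth-1}$, $\ref{item:thm-stab-synth-3}\Rightarrow\ref{item:thm-stab-synth-2}$, the converse under strict polytopicity via Lemma~\ref{lem:equiv-pqs}, and the non-singularity of $X_i(\pi)$ is correct and matches the paper. The gap is in $\ref{item:thm-stab-synth-4}\Rightarrow\ref{item:thm-stab-synth-1}$. By eliminating $Z_i$ and congruencing with $\operatorname{diag}(X_{ij}^{-\top},I)$ \emph{at the vertex level}, you reduce \eqref{eq:thm-stab-synth-4} to the slack-free inequalities $(A_i+B\bar K_{ij})^\top\bar P_j(A_i+B\bar K_{ij})\prec\bar P_i$ and then try to recover the closed-loop descent by a Jensen/Cauchy--Schwarz step in $j$. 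But the certificate you cite gives, with $M_j=A_i+B\bar K_{ij}$ and after a Schur complement,
\[
\bar P_i\succ\sum_{j}\mu_jM_j^\top\bar P_jM_j\succcurlyeq \tilde M_i(\mu)^\top P(\mu)\tilde M_i(\mu),\qquad \tilde M_i(\mu)=P(\mu)^{-1}\sum_{j}\mu_j\bar P_jM_j=A_i+P(\mu)^{-1}\sum_j\mu_j\bar P_jB\bar K_{ij},
\]
and $\tilde M_i(\mu)$ is a $\bar P_j$-weighted average of the vertex closed loops: it is neither of the form $A_i+BK$ for any state feedback $K$, nor equal to $A_i+B\big(\sum_j\mu_j\bar K_{ij}\big)X_i(\mu)^{-1}$. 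So the inequality you obtain says nothing about the controller of \ref{item:thm-stab-synth-1}. Once the slacks are gone you are left with inequalities of the type \eqref{eq:poly-q-stabilizability-analysis-nec}, and the paper explicitly flags (after Theorem~\ref{thm:poly-quadratic-stabilizability-nec-suff}) that convexifying these over $j$ is exactly what fails: the plain combination produces $\big(\sum_j\mu_j\bar S_j\big)^{-1}$ instead of $P(\mu)$, and the $\bar P_j$-weighted combination produces a perturbation that cannot be pushed through $B$.

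The whole point of the $3\times 3$ structure in \eqref{eq:thm-stab-synth-4} is that the $j$-dependent data $X_{ij},Y_{ij}$ and the $j$-dependent block $\bar S_j$ occupy non-adjacent blocks separated by the $j$-independent slack $Z_i$, so the paper first congruences with $\operatorname{diag}\{I,I,\bar P_j\}$, multiplies by $\xi_j(\pi_+)$ and sums over $j$, obtaining \eqref{eq:interm6} in which every block is a clean convex combination; only \emph{then} does it eliminate the slacks, via Lemma~\ref{lem:youngs} and a congruence with the \emph{parameter-dependent} $\operatorname{diag}\{X_i^{-1}(\pi_+),I,I\}$, which is what makes the rational gain $Y_i(\pi_+)X_i^{-1}(\pi_+)$ of \ref{item:thm-stab-synth-1} appear in the off-diagonal block, followed by a second Young/Schur step to remove $Z_i$. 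To repair your argument, postpone both the $Z_i$-elimination and the $X_{ij}$-congruence until after the summation over $j$; in the order you propose, the information carried by the slack variables is destroyed before it can be used.
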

\noindent Statements~\ref{item:thm-stab-synth-3} and~\ref{item:thm-stab-synth-4} are LMIs in $\bar{S}_i$, $X_{ij}$, $Y_{i}$, $Y_{ij}$ and $Z_{i}$, $i,j\in\mathcal{N}$. Note that implementation of~\eqref{eq:poly-q-stabilizability-analysis-gain} and $K$ in~\ref{item:thm-stab-synth-1} will typically require the inverses $S(p_{k+1})$ and $(S(p_{k+1})+BB^\top)^{-1}$ or $X_i(\pi)^{-1}=(\sum_{j\in\mathcal{N}}\xi_j(\pi)X_{ij})^{-1}$ to be computed on-line. These computationally-expensive on-line computations can be avoided by using~\ref{item:thm-stab-synth-3} or by setting $X_{ij}=\bar{X}_i\in\mathbb{R}^{n_x\times n_x}$, $i,j\in\mathcal{N}$. Note that this was not an option in Theorem~\ref{thm:poly-q-stabilizability-analysis}, since the controller variables do not explicitly appear in~\eqref{eq:poly-q-stabilizability-analysis}. The equivalence $\ref{item:thm-stab-synth-2}\Leftrightarrow\ref{item:thm-stab-synth-3}$ for strictly polytopic systems follows from Lemma~\ref{lem:equiv-pqs}. \ref{item:thm-stab-synth-3} shows that eliminating the slack variables by setting them to $\bar{S}_i$ in~\cite[Theorem 4]{Daafouz2001} is not conservative. 

\section{Numerical example}\label{sec:case-study}
We now demonstrate our results by means of a numerical example. To this end, we consider~\eqref{eq:system} with
\begin{align}
	A(\pi) &= \begin{bmatrix}\begin{smallmatrix}
		\frac{4}{5} & -\frac{1}{4} & 0 & 1\\
		1 & 0 & 0 & 0\\
		0 & 0 & \frac{1}{5} & \frac{3}{100}\\
		0 & 0 & 1 & 0
	\end{smallmatrix}\end{bmatrix}+\pi\begin{bmatrix}\begin{smallmatrix}
		0\\
		0\\
		1\\
		0
	\end{smallmatrix}\end{bmatrix}\begin{bmatrix}\begin{smallmatrix}
		\frac{4}{5}\\
		-\frac{1}{4}\\
		-\frac{1}{5}\\
		-\frac{3}{100}
	\end{smallmatrix}\end{bmatrix}^{\mathclap{\top}},~B=\begin{bmatrix}\begin{smallmatrix}
		1\\
		0\\
		1\\
		0
	\end{smallmatrix}\end{bmatrix}\text{ and }\nonumber\\
	C &= \begin{bmatrix}\begin{smallmatrix}
		1 & 0 & 0 & 0
	\end{smallmatrix}\end{bmatrix},\quad \mathbb{P}_{\gamma}\coloneqq \{p\mid |p|\leqslant \gamma\}, \gamma\in\mathbb{R}_{>0}.\label{eq:matrices}
\end{align}
Using our results, we aim to find the largest $\gamma^\star\in\mathbb{R}_{>0}$ such that the system~\eqref{eq:system} with $p_k\in\mathbb{P}_{\gamma^\star}$, $k\in\mathbb{N}$, is poly-QS/poly-Q stabilizable/poly-Q detectable for all $k\in\mathbb{N}$ and $\bm{p}\in\mathcal{P}_{\gamma^\star}\coloneqq \{\{p_k\}_{k\in\mathbb{N}}\mid p_k\in\mathbb{P}_{\gamma^\star}, k\in\mathbb{N}\}$. Based on the strictly polytopic representation $A(\pi)=((\gamma-\pi)/(2\gamma))A(-\gamma) + ((\gamma+\pi)/(2\gamma))A(\gamma)$, all results in the prequel are applied, using YALMIP toolbox~\cite{Lofberg2004} for MATLAB with external solver MOSEK~\cite{Mosek2019} to solve the LMI-based conditions.

\subsection{Comparison of different conditions}
As summarized in Table~\ref{tab:case-study}, the system is poly-QS for all $|\gamma|\leqslant \gamma^\star= 0.6840$ and the same $\gamma^\star$ is found using all of the conditions in Lemma~\ref{lem:equiv-pqs}, which supports their equivalence. Regarding poly-Q detectability, Theorem~\ref{thm:poly-q-detectability-analysis} and the conditions in Remark~\ref{rem:heemels2010} are equivalent so we expect the obtained $\gamma^\star$ to be equal, however, as seen in Table~\ref{tab:case-study} we compute slightly different $\gamma^\star$'s due to numerics. Next, we test poly-Q stabilizability. Firstly, we apply~\cite[Theorem 4]{Daafouz2001} to compare the result with~\ref{item:thm-stab-synth-3}, which, since both $\gamma^\star$'s are equal, supports their equivalence. Finally, we see that there is no significant difference between the $\gamma^\star$'s obtained through Theorem~\ref{thm:poly-q-stabilizability-analysis} and~\ref{item:thm-stab-synth-4} and the one obtained using~\ref{item:thm-stab-synth-3} and~\cite[Theorem 4]{Daafouz2001}. Looking at this, it appears as if the $p_{k+1}$-dependence in~\eqref{eq:poly-q-stabilizability-analysis-gain} does not translate to less conservative results, which may indicate that perhaps the polytopic structure in~\ref{item:thm-stab-synth-2} is (similar to the observer setting) already rich enough to be non-conservative for poly-Q stabilizability. However, other potential sources of conservatism in Theorem~\ref{thm:poly-q-stabilizability-analysis} prevent us from drawing any conclusions without further research.
\begin{table}[!tb]
\centering
\caption{Stability/stabilizability/detectability bounds $\gamma^\star$}
\label{tab:case-study}%
\begin{tabular}{@{}cc|cc|cc@{}}
\hline
\multicolumn{2}{c}{Poly-QS} & \multicolumn{2}{c}{Poly-Q detectability} & \multicolumn{2}{c}{Poly-Q stabilizability}\\
\hline
Conditions & $\gamma^\star$ & Conditions & $\gamma^\star$ & Conditions & $\gamma^\star$\\
\hline
\ref{item:lem-equiv-pqs-2} & $0.6840$ & Theorem~\ref{thm:poly-q-detectability-analysis} & $4.1917$ & Theorem~\ref{thm:poly-q-stabilizability-analysis} & $1.2198$\\
\ref{item:lem-equiv-pqs-3} & $0.6840$ & Remark~\ref{rem:heemels2010} & $4.2009$ & \ref{item:thm-stab-synth-3} & $1.2202$\\
\ref{item:lem-equiv-pqs-4} & $0.6840$ & & & \ref{item:thm-stab-synth-4} & $1.2201$\\
 & & & & \cite[Thm 4]{Daafouz2001} & $1.2202$
\end{tabular}
\vspace*{-0.5cm}
\end{table}
\begin{table}[!tb]
\centering
\caption{Average computation times}
\label{tab:case-study-comp-times}%
\begin{tabular}{@{}cc|cc|cc@{}}
\hline
\multicolumn{2}{c}{Poly-QS} & \multicolumn{2}{c}{Poly-Q detectability} & \multicolumn{2}{c}{Poly-Q stabilizability}\\
\hline
Conditions & Time [s] & Conditions & Time [s] & Conditions & Time [s]\\
\hline
\ref{item:lem-equiv-pqs-2} & $1.5980$ & Theorem~\ref{thm:poly-q-detectability-analysis} & $1.1293$ & Theorem~\ref{thm:poly-q-stabilizability-analysis} & $2.5588$\\
\ref{item:lem-equiv-pqs-3} & $1.5758$ & Remark~\ref{rem:heemels2010} & $2.0439$ & \ref{item:thm-stab-synth-3} & $2.0011$\\
\ref{item:lem-equiv-pqs-4} & $1.0059$ & & & \ref{item:thm-stab-synth-4} & $8.4978$\\
 & & & & \cite[Thm 4]{Daafouz2001} & $4.3250$
\end{tabular}
\vspace*{-0.5cm}
\end{table}

\subsection{Computational complexity}\label{sec:comp-compl}
\begin{table*}[!t]
\centering
\caption{Number of scalar decision variables \#DV}
\label{tab:dec-vars}%
\begin{tabular}{@{}cc|cc|cc@{}}
\hline
\multicolumn{2}{c}{Poly-QS} & \multicolumn{2}{c}{Poly-Q detectability} & \multicolumn{2}{c}{Poly-Q stabilizability}\\
\hline
Conditions & \#DV & Conditions & \#DV & Conditions & \#DV\\
\hline
\ref{item:lem-equiv-pqs-2} & $N(n_x^2+n_x(n_x+1)/2)$ & Theorem~\ref{thm:poly-q-detectability-analysis} & $Nn_x(n_x+1)/2$ & Theorem~\ref{thm:poly-q-stabilizability-analysis} & $N(n_x^2+n_x(n_x+1)/2)$\\
\ref{item:lem-equiv-pqs-3} & $N(n_x^2+n_x(n_x+1)/2)$ & Remark~\ref{rem:heemels2010} & $N(n_x^2 + n_x(n_x+1)/2 + n_xn_y)$ & \ref{item:thm-stab-synth-3} & $N(n_x(n_x+1)/2+n_xn_u)$\\
\ref{item:lem-equiv-pqs-4} & $Nn_x(n_x+1)/2$ & & & \ref{item:thm-stab-synth-4} & $N(N(n_x^2 + n_xn_u)+n_x^2+n_x(n_x+1)/2)$\\
 & & & & \cite[Thm 4]{Daafouz2001} & $N(n_x^2+n_x(n_x+1)/2+n_xn_u)$
\end{tabular}\\\vspace*{.1cm}
\hrulefill\vspace*{-0.5cm}
\end{table*}
In general, LMIs can be solved with complexity that is proportional to the number of decision variables cubed (i.e., \#DV$^3$), see, e.g.,~\cite{Loquen2010}. Thereby, it is expected that the reduced number of decision variables in, e.g.,~\ref{item:lem-equiv-pqs-4}, Theorem~\ref{thm:poly-q-detectability-analysis} and~\ref{item:thm-stab-synth-3} compared to similar existing conditions translates also to lower computational cost. For small problems, these expected differences in complexity may be overshadowed by computational overhead, therefore, we also apply our results to a somewhat larger system 
\begin{equation*}
    \begin{aligned}
        x_{k+1} &= \operatorname{diag}\{A(p^1_{k}),A(p^2_{k}),A(p^3_{k})\}x_k +\operatorname{diag}\{B,B,B\}u_k,\\
        y_k &= \operatorname{diag}\{C,C,C\}x_k,
    \end{aligned}
\end{equation*}
where $p_k=(p^1_k,p^2_k,p^3_k)\in\mathbb{P}_{\gamma^\star}^3$, $k\in\mathbb{N}$, and $A$, $B$ and $C$ as in~\eqref{eq:matrices}. This system contains $n_x=12$ states, $n_u=3$ inputs and $n_y=3$ outputs as well as $3$ unknown parameters that lie in $\mathbb{P}^3_{\gamma^\star}$. Since this system consists of $3$ decoupled versions of~\eqref{eq:system} with~\eqref{eq:matrices}, its stability, stabilizability and detectability properties remain the same. Hence, we use the values for $\gamma^\star$ that we computed for the system~\eqref{eq:system} with~\eqref{eq:matrices}, which are shown in Table~\ref{tab:case-study}. The recorded average computation times are shown in Table~\ref{tab:case-study-comp-times} as well as formulae for the number of decision variables in the LMIs in Table~\ref{tab:dec-vars}. As expected, we observe that~\ref{item:lem-equiv-pqs-4} requires significantly less time to solve compared to~\ref{item:lem-equiv-pqs-2} and~\ref{item:lem-equiv-pqs-3} since~\ref{item:lem-equiv-pqs-4} does not contain the additional $N$ $n_x$-by-$n_x$ slack variables. Similarly, the conditions in Theorem~\ref{thm:poly-q-detectability-analysis} can be solved faster than those in Remark~\ref{rem:heemels2010} since Remark~\ref{rem:heemels2010} contains similar slack variables as well as the $N$ $n_x$-by-$n_y$ matrices $Y_i$ related to the synthesized observer gains. Finally, we find that we can solve~\ref{item:thm-stab-synth-3} significantly faster than Theorem~\ref{thm:poly-q-detectability-analysis} due to the absence of slack variables. Moreover, the conditions in~\cite[Theorem 4]{Daafouz2001} were also found to be more complex since they require the computation of the controller gains and~\ref{item:thm-stab-synth-4}, although possibly less conservative, contains $N^2$ $n_x$-by-$n_x$ slack variables, which renders it by far the most computationally expensive poly-Q stabilizability condition. These computational benefits are even higher for systems with larger state, input and/or output dimensions, more parameters and parameter sets with more vertices $N$, as also following from Table~\ref{tab:dec-vars}.

\section{Conclusions}\label{sec:conclusions}
In this note, we studied stability, detectability and stabilizability for polytopic LPV systems. By generalizing the Lyapunov-based LMI test for detectability in the LTI setting, we have developed a similar LMI-based test for poly-Q detectability that turned out to be necessary and sufficient. For stabilizability of polytopic systems, we obtained a parameter-dependent necessary and sufficient condition as well as an alternative LMI-based sufficient condition that is no longer parameter dependent and, thereby, can be systematically solved. For all of these results, we discussed their relation to their LTI counterparts and existing synthesis conditions and their computational advantages which were also illustrated by a numerical example.

\bibliographystyle{IEEEtran}
\bibliography{phd-bibtex}

\begin{thebibliography}{10}
\providecommand{\url}[1]{#1}
\csname url@samestyle\endcsname
\providecommand{\newblock}{\relax}
\providecommand{\bibinfo}[2]{#2}
\providecommand{\BIBentrySTDinterwordspacing}{\spaceskip=0pt\relax}
\providecommand{\BIBentryALTinterwordstretchfactor}{4}
\providecommand{\BIBentryALTinterwordspacing}{\spaceskip=\fontdimen2\font plus
\BIBentryALTinterwordstretchfactor\fontdimen3\font minus
  \fontdimen4\font\relax}
\providecommand{\BIBforeignlanguage}[2]{{%
\expandafter\ifx\csname l@#1\endcsname\relax
\typeout{** WARNING: IEEEtran.bst: No hyphenation pattern has been}%
\typeout{** loaded for the language `#1'. Using the pattern for}%
\typeout{** the default language instead.}%
\else
\language=\csname l@#1\endcsname
\fi
#2}}
\providecommand{\BIBdecl}{\relax}
\BIBdecl

\bibitem{Scherer2011}
C.~W. {Scherer} and S.~{Weiland}, ``Linear matrix inequalities in control,'' in
  \emph{The Control Systems Handbook, Second Edition: Control Systems Advanced
  Methods}.\hskip 1em plus 0.5em minus 0.4em\relax CRC Press, 2011, pp.
  24/1--24/30.

\bibitem{Khalil1996-2nd-edition}
H.~K. {Khalil}, \emph{Nonlinear Systems}, 2nd~ed.\hskip 1em plus 0.5em minus
  0.4em\relax Prentice-Hall, 1996.

\bibitem{Jiang2002}
Z.-P. {Jiang} and Y.~{Wang}, ``A converse {Lyapunov} theorem for discrete-time
  systems,'' \emph{Syst. Control Lett.}, vol.~45, no.~1, pp. 49--58, 2002.

\bibitem{Hespanha2018}
J.~P. {Hespanha}, \emph{Linear systems theory}.\hskip 1em plus 0.5em minus
  0.4em\relax Princeton University Press, 2018.

\bibitem{Mason2007}
P.~{Mason}, M.~{Sigalotti}, and J.~{Daafouz}, ``On stability analysis of linear
  discrete-time switched systems using quadratic {Lyapunov} functions,'' in
  \emph{48th IEEE Conf. Decis. Control}, 2007, pp. 5629--5633.

\bibitem{Daafouz2001}
J.~{Daafouz} and J.~{Bernussou}, ``Parameter dependent {Lyapunov} functions for
  discrete time systems with time varying parametric uncertainties,''
  \emph{Syst. Control Lett.}, vol.~43, pp. 355--359, 2001.

\bibitem{Heemels2010}
W.~P.~M.~H. {Heemels}, J.~{Daafouz}, and G.~{Millerioux}, ``Observer-based
  control of discrete-time {LPV} systems with uncertain parameters,''
  \emph{IEEE Trans. Autom. Control}, vol.~55, no.~9, pp. 2130--2135, 2010.

\bibitem{Oliveira2022}
M.~S. {de Oliveira}, K.~S. {Mayer}, and R.~L. {Pereira}, ``Discrete-time
  {$\mathcal{H}_2$} and {$\mathcal{H}_\infty$} poly-quadratic filter designs
  for polytopic {LPV} systems,'' \emph{Int. J. Syst. Sci.}, vol.~53, no.~10,
  pp. 2219--2234, 2022.

\bibitem{Pereira2021}
R.~L. {Pereira}, M.~S. {de Oliveira}, and K.~H. Kienitz, ``Discrete-time
  {$\mathcal{H}_2$} state-feedback control of polytopic {LPV} systems,''
  \emph{Optim. Control Appl. Methods}, vol.~42, no.~4, pp. 1016--1029, 2021.

\bibitem{Pandey2018}
A.~P. {Pandey} and M.~C. {de Oliveira}, ``On the necessity of {LMI}-based
  design conditions for discrete-time {LPV} filters,'' \emph{IEEE Trans. Autom.
  Control}, vol.~63, no.~9, pp. 3187--3188, 2018.

\bibitem{Philippe2016}
M.~{Philippe}, R.~{Essick}, G.~E. {Dullerud}, and R.~M. Jungers, ``Stability of
  discrete-time switching systems with constrained switching sequences,''
  \emph{Automatica}, vol.~72, pp. 242--250, 2016.

\bibitem{Athanasopoulos2014}
N.~{Athanasopoulos} and M.~{Lazar}, ``Stability analysis of switched linear
  systems defined by graphs,'' in \emph{53rd IEEE Conf. Decis. Control}, 2014,
  pp. 5451--5456.

\bibitem{Liberzon2003}
D.~Liberzon, \emph{Switching in Systems and Control}.\hskip 1em plus 0.5em
  minus 0.4em\relax Birkh{\"a}user, 2003.

\bibitem{Golub1996}
G.~H. {Golub} and C.~F. {Van Loan}, \emph{Matrix computations}, 3rd~ed.\hskip
  1em plus 0.5em minus 0.4em\relax The John Hopkins Univ. Press, 1996.

\bibitem{Duan2013}
G.-R. {Duan} and H.-H. {Yu}, \emph{{LMIs} in control systems: {Analysis},
  design and applications}.\hskip 1em plus 0.5em minus 0.4em\relax CRC Press,
  2015.

\bibitem{Lofberg2004}
J.~{L{\"{o}}fberg}, ``{YALMIP}: A toolbox for modeling and optimization in
  {MATLAB},'' in \emph{IEEE Int. Symp. Comput. Aided Control Syst. Des.}, 2004,
  pp. 285--289.

\bibitem{Mosek2019}
{MOSEK ApS}, \emph{The {MOSEK} optimization toolbox for {MATLAB} manual.
  Version 9.0.}, 2019.

\bibitem{Loquen2010}
T.~{Loquen}, D.~{Nesic}, C.~{Prieur}, S.~{Tarbouriech}, A.~R. {Teel}, and
  L.~{Zaccarian}, ``Piecewise quadratic {Lyapunov} functions for linear control
  systems with first order reset elements,'' in \emph{8th IFAC Symp. Nonlinear
  Control Syst.}, 2010, pp. 807--812.

\bibitem{Iwasaki1995}
T.~{Iwasaki} and R.~E. {Skelton}, ``A unified approach to fixed-order
  controller design via linear matrix inequalities,'' \emph{Math. Probl. Eng.},
  vol.~1, pp. 59--75, 1995.

\bibitem{Ishihara2017}
J.~Y. {Ishihara}, H.~T.~M. {Kussaba}, and R.~A. {Borges}, ``Existence of
  continuous or constant {Finsler's} variables for parameter-dependent
  systems,'' \emph{IEEE Trans. Autom. Control}, vol.~62, no.~8, pp. 4187--4193,
  2017.

\bibitem{Zhou1988}
K.~{Zhou} and P.~P. {Khargonekar}, ``Robust stabilization of linear systems
  with norm-bounded time-varying uncertainty,'' \emph{Syst. Control Lett.},
  vol.~10, no.~1, pp. 17--20, 1988.

\end{thebibliography}

\section*{Appendix}
\subsection{Existing results}
\begin{lemma}[{\cite[Lemma 4]{Iwasaki1995}}]\label{lem:proj-lem}
	Given matrices $\Psi\in\mathbb{S}^{n}$, $\Gamma\in\mathbb{R}^{m\times n}$ and $\Omega\in\mathbb{R}^{p\times n}$ with $\operatorname{rank} \Omega=p<n$. There exists a matrix $\Lambda\in\mathbb{R}^{m\times p}$ such that $\Psi + \Gamma^\top\Lambda\Omega + \Omega^\top\Lambda^\top\Gamma\succ 0$, if and only if $\Gamma_{\perp}^\top\Psi\Gamma_{\perp}\succ 0\text{ and }\Omega_\perp^\top\Psi\Omega_{\perp}\succ 0$.
\end{lemma}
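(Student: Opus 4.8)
The statement is the classical Projection (Elimination) Lemma; a self-contained proof is in \cite{Iwasaki1995}, but here is the route I would take. I would prove the two implications separately: necessity is immediate, and all the work is in sufficiency, for which I would use a convex-duality (separating-hyperplane) argument.

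\emph{Necessity.} If $M\coloneqq\Psi+\Gamma^\top\Lambda\Omega+\Omega^\top\Lambda^\top\Gamma\succ 0$, then for every $v$ in the column span of $\Gamma_\perp$ we have $\Gamma v=0$, so $v^\top M v=v^\top\Psi v>0$; hence $\Gamma_\perp^\top\Psi\Gamma_\perp\succ 0$, and symmetrically $\Omega_\perp^\top\Psi\Omega_\perp\succ 0$.

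\emph{Sufficiency.} The set $\mathcal{L}\coloneqq\{\Psi+\Gamma^\top\Lambda\Omega+\Omega^\top\Lambda^\top\Gamma\mid\Lambda\in\mathbb{R}^{m\times p}\}$ is an affine subspace of $\mathbb{S}^n$ with direction space $\mathcal{V}\coloneqq\{\Gamma^\top\Lambda\Omega+\Omega^\top\Lambda^\top\Gamma\mid\Lambda\in\mathbb{R}^{m\times p}\}$, and a short computation with the trace inner product gives $\mathcal{V}^\perp=\{Z\in\mathbb{S}^n\mid\Omega Z\Gamma^\top=0\}$. Since $\mathbb{S}^n_{\succ 0}$ is open, convex and nonempty, $\mathcal{L}$ meets it if and only if the two sets cannot be properly separated; unwinding the separation yields the dual characterization: $\mathcal{L}\cap\mathbb{S}^n_{\succ 0}=\emptyset$ if and only if there exists $Z\in\mathbb{S}^n_{\succcurlyeq 0}$, $Z\neq 0$, with $\Omega Z\Gamma^\top=0$ and $\operatorname{tr}(\Psi Z)\leqslant 0$. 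So it suffices to show that the hypotheses $\Gamma_\perp^\top\Psi\Gamma_\perp\succ 0$ and $\Omega_\perp^\top\Psi\Omega_\perp\succ 0$ forbid such a $Z$. Suppose one existed and factor it as $Z=FF^\top$ with $F\in\mathbb{R}^{n\times r}$, $r=\operatorname{rank}Z$. Then $0=\Omega Z\Gamma^\top=(\Omega F)(\Gamma F)^\top$, i.e. $\operatorname{range}\big((\Gamma F)^\top\big)\subseteq\ker(\Omega F)$; since $\operatorname{range}\big((\Gamma F)^\top\big)=(\ker(\Gamma F))^\perp$ in $\mathbb{R}^r$, this is equivalent to $(\ker(\Omega F))^\perp\subseteq\ker(\Gamma F)$. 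Choosing an orthonormal basis $y_1,\dots,y_r$ of $\mathbb{R}^r$ adapted to $\mathbb{R}^r=(\ker(\Omega F))^\perp\oplus\ker(\Omega F)$, we get $Fy_k\in\ker\Gamma$ for the first block of indices and $Fy_k\in\ker\Omega$ for the rest. Hence $\operatorname{tr}(\Psi Z)=\operatorname{tr}(F^\top\Psi F)=\sum_{k=1}^{r}(Fy_k)^\top\Psi(Fy_k)$, and every summand is $\geqslant 0$ because $Fy_k$ lies in $\ker\Gamma$ or in $\ker\Omega$, on which $\Psi$ is positive definite by hypothesis; a summand is $0$ only if $Fy_k=0$. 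Therefore $\operatorname{tr}(\Psi Z)>0$ unless $F=0$, i.e. unless $Z=0$ — contradicting $Z\neq 0$ and $\operatorname{tr}(\Psi Z)\leqslant 0$. This proves sufficiency; the rank hypothesis on $\Omega$ is needed only to guarantee that $\Omega_\perp$ is well-defined and nontrivial.

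I expect the main obstacle to be the duality step: correctly identifying the dual infeasibility certificate $Z$ and justifying that the separating functional may be taken positive semidefinite and nonzero (the usual subtlety when separating against the open PSD cone). Once that is set up, ruling out $Z$ is elementary linear algebra. A more computational alternative would be to apply an orthogonal change of coordinates putting $\Omega=[\,\Omega_1\ \ 0\,]$ with $\Omega_1$ invertible, eliminate a block via a Schur complement (using $\Omega_\perp^\top\Psi\Omega_\perp\succ 0$), and complete the square in the remaining free variable; but handling the possibly-singular quadratic term then requires an extra limiting argument that I find less transparent than the duality route.
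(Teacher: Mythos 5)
Your argument is correct, but note that the paper does not prove this lemma at all: it is listed under ``Existing results'' and imported verbatim from \cite[Lemma 4]{Iwasaki1995}, so there is no in-paper proof to match. On its own merits, your route is sound. Necessity is exactly as you say. For sufficiency, the computation $\mathcal{V}^\perp=\{Z\in\mathbb{S}^n\mid \Omega Z\Gamma^\top=0\}$ is right (the two trace terms combine to $2\operatorname{tr}(\Omega Z\Gamma^\top\Lambda)$), the separation step correctly yields a certificate $Z\succcurlyeq 0$, $Z\neq 0$, $\Omega Z\Gamma^\top=0$, $\operatorname{tr}(\Psi Z)\leqslant 0$ (the conic structure of $\mathbb{S}^n_{\succ 0}$ forces $Z\succcurlyeq 0$ and the affine structure of $\mathcal{L}$ forces $Z\in\mathcal{V}^\perp$), and the elimination of $Z$ is clean: since $F$ has full column rank, every $Fy_k$ is nonzero and lies in $\ker\Gamma$ or $\ker\Omega$, so every summand of $\operatorname{tr}(F^\top\Psi F)$ is strictly positive, contradicting $\operatorname{tr}(\Psi Z)\leqslant 0$. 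The only point worth flagging is the one you already flag yourself: the separating functional must be argued nonzero and normalizable to the closed PSD cone, which is standard for an open convex cone disjoint from an affine set. The contrast with the cited source is that Iwasaki and Skelton's proof is constructive --- it exhibits an explicit $\Lambda$ (indeed a parametrization of all solutions) via block decompositions and completions of squares --- whereas your duality proof only certifies existence; for the way the lemma is used in this paper (pointwise existence of $L$ in the proof of Theorem~\ref{thm:poly-q-detectability-analysis} and of $K$ in Theorem~\ref{thm:poly-quadratic-stabilizability-nec-suff}, with the actual gains \eqref{eq:poly-q-detectability-analysis-gain} and \eqref{eq:poly-q-stabilizability-analysis-gain} constructed separately), pure existence is all that is needed, so your non-constructive argument suffices.
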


\begin{lemma}[{\cite{Ishihara2017}}]\label{lem:finsler}
	For continuous functions $\Psi\colon\mathbb{P}\rightarrow\mathbb{S}^{n}$ and $\Gamma\colon\mathbb{P}\rightarrow\mathbb{R}^{m\times n}$ on a compact set $\mathbb{P}\subset\mathbb{R}^{p}$, it holds that $\Gamma_{\perp}^\top(\pi)\Psi(\pi)\Gamma_{\perp}(\pi)\succ 0$ for all $\pi\in\mathbb{P}$, if and only if $\Psi(\pi) +\mu \Gamma^\top(\pi)\Gamma(\pi)\succ 0$ for all $\pi\in\mathbb{P}$, for some $\mu\in\mathbb{R}$.
\end{lemma}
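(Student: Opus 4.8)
I would first dispatch the reverse implication, which is pointwise and needs no compactness: given $\mu\in\mathbb{R}$ with $\Psi(\pi)+\mu\Gamma^\top(\pi)\Gamma(\pi)\succ 0$ for all $\pi\in\mathbb{P}$, fix $\pi$ and any nonzero $v\in\operatorname{ker}\Gamma(\pi)$; then $\Gamma(\pi)v=0$, so $v^\top\Psi(\pi)v = v^\top(\Psi(\pi)+\mu\Gamma^\top(\pi)\Gamma(\pi))v > 0$. Since the columns of $\Gamma_\perp(\pi)$ form a basis of $\operatorname{ker}\Gamma(\pi)$, this is exactly $\Gamma_\perp^\top(\pi)\Psi(\pi)\Gamma_\perp(\pi)\succ 0$.

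For the forward implication, the plan is to recast both sides as sign conditions on two scalar continuous functions over a single compact set, and then extract a uniform multiplier by a contradiction-plus-subsequence argument. I would set $\mathcal{U}\coloneqq\{v\in\mathbb{R}^{n}\mid\|v\|=1\}$ and, on the compact set $K\coloneqq\mathbb{P}\times\mathcal{U}$, define the continuous functions $f(\pi,v)\coloneqq v^\top\Psi(\pi)v$ and $g(\pi,v)\coloneqq\|\Gamma(\pi)v\|^2 = v^\top\Gamma^\top(\pi)\Gamma(\pi)v\geqslant 0$. Then $\Gamma_\perp^\top(\pi)\Psi(\pi)\Gamma_\perp(\pi)\succ 0$ for all $\pi$ is equivalent to $f(\pi,v)>0$ whenever $(\pi,v)\in K$ satisfies $g(\pi,v)=0$ (by homogeneity, $g(\pi,v)=0$ with $\|v\|=1$ precisely captures the nonzero elements of $\operatorname{ker}\Gamma(\pi)$), and the existence of $\mu$ with $\Psi(\pi)+\mu\Gamma^\top(\pi)\Gamma(\pi)\succ 0$ for all $\pi$ is equivalent to the existence of $\mu$ with $f+\mu g>0$ on all of $K$.

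I would then argue by contradiction: if no such $\mu$ exists, then for each $m\in\mathbb{N}_{\geqslant 1}$ there is a point $(\pi_m,v_m)\in K$ with $f(\pi_m,v_m)+m\,g(\pi_m,v_m)\leqslant 0$. By compactness of $K$, a subsequence converges to some $(\pi^\star,v^\star)\in K$. Since $f$ is bounded on $K$ and $g\geqslant 0$, the inequality forces $g(\pi_m,v_m)\leqslant (\sup_{K}|f|)/m\to 0$, whence $g(\pi^\star,v^\star)=0$ by continuity; it also gives $f(\pi_m,v_m)\leqslant 0$, whence $f(\pi^\star,v^\star)\leqslant 0$. But $\|v^\star\|=1$ together with $g(\pi^\star,v^\star)=0$ means $v^\star$ is a nonzero kernel vector, so the hypothesis yields $f(\pi^\star,v^\star)>0$, a contradiction. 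Hence a finite $\mu$ exists.

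The only genuine difficulty is securing a \emph{single} $\mu$ valid across all of $\mathbb{P}$: the pointwise Finsler multiplier $\mu(\pi)$ need not vary continuously (indeed $\operatorname{ker}\Gamma(\pi)$, and hence $\Gamma_\perp(\pi)$, can change dimension), so a naive supremum over $\pi$ of pointwise multipliers is not obviously finite. The contradiction argument above circumvents this precisely because it never invokes $\Gamma_\perp(\pi)$ as a function of $\pi$; it uses only continuity of $\Psi$ and $\Gamma$ (hence of $f$ and $g$) together with compactness of $K$, exactly the hypotheses stated. This is also consistent with the footnote's observation that uniform boundedness of the data on $\mathbb{P}$ is what is really needed.
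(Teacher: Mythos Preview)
Your proof is correct. The paper does not prove this lemma at all; it merely cites it as an existing result from~\cite{Ishihara2017} in the Appendix under ``Existing results.'' So there is no paper proof to compare against.

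Your argument is a clean and standard one: the reverse implication is immediate pointwise, and the forward implication is handled by a compactness-plus-contradiction argument on the product $\mathbb{P}\times\mathcal{U}$, avoiding any need to track $\Gamma_\perp(\pi)$ as a function of $\pi$. Your remark about the potential discontinuity of $\operatorname{ker}\Gamma(\pi)$ (and hence the inadequacy of simply taking a supremum of pointwise Finsler multipliers) correctly identifies why the uniform-$\mu$ statement requires some care, and your approach sidesteps it elegantly.
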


\begin{lemma}[{\cite{Zhou1988}}]\label{lem:youngs}
	For matrices $X,Y\in\mathbb{R}^{m\times n}$ and $S\in\mathbb{S}^{m}_{\succ 0}$, it holds that $X^\top Y+Y^\top X\preccurlyeq X^\top S^{-1}X + Y^\top SY$.
\end{lemma}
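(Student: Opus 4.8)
The plan is to prove this matrix Young's inequality by a completion-of-squares argument built on the symmetric square root of $S$. Since $S\in\mathbb{S}^m_{\succ 0}$, it admits a unique symmetric positive-definite square root $S^{1/2}\in\mathbb{S}^m_{\succ 0}$, and its inverse $S^{-1/2}\coloneqq(S^{1/2})^{-1}$ is again symmetric positive definite and satisfies $S^{-1/2}S^{-1/2}=S^{-1}$ together with $S^{1/2}S^{-1/2}=S^{-1/2}S^{1/2}=I$. The key object is the real matrix $W\coloneqq S^{-1/2}X-S^{1/2}Y\in\mathbb{R}^{m\times n}$, whose Gram matrix $W^\top W\in\mathbb{S}^n$ is positive semi-definite regardless of the choice of $W$.

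First I would expand $W^\top W\succcurlyeq 0$ term by term. Using the symmetry of $S^{\pm 1/2}$ and the identities above, the diagonal products evaluate to $(S^{-1/2}X)^\top(S^{-1/2}X)=X^\top S^{-1}X$ and $(S^{1/2}Y)^\top(S^{1/2}Y)=Y^\top SY$, while the off-diagonal products give $(S^{-1/2}X)^\top(S^{1/2}Y)=X^\top Y$ and $(S^{1/2}Y)^\top(S^{-1/2}X)=Y^\top X$. Collecting these contributions yields
\[
W^\top W = X^\top S^{-1}X + Y^\top SY - X^\top Y - Y^\top X \succcurlyeq 0,
\]
which rearranges immediately to the asserted bound $X^\top Y+Y^\top X\preccurlyeq X^\top S^{-1}X+Y^\top SY$.

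There is no substantive obstacle here; the single point requiring care is verifying that the two cross terms combine cleanly into $X^\top Y+Y^\top X$, which hinges on $S^{1/2}$ being symmetric so that the factors $S^{-1/2}S^{1/2}=I$ cancel exactly in the off-diagonal products. I would spell this out explicitly in the final write-up to rule out any transpose or sign slip. Note that positive-definiteness of $S$ is used only to guarantee the existence and symmetry of $S^{1/2}$ and $S^{-1/2}$; the inequality itself then follows purely from the positive semi-definiteness of a Gram matrix, and the argument makes no use of invertibility of $X$ or $Y$.
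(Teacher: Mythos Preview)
Your argument is correct: the completion-of-squares via $W=S^{-1/2}X-S^{1/2}Y$ and $W^\top W\succcurlyeq 0$ is the standard proof of this matrix Young inequality, and the cross terms indeed collapse to $X^\top Y+Y^\top X$ because $S^{\pm 1/2}$ are symmetric. Note, however, that the paper does not supply its own proof of this lemma; it is listed under ``Existing results'' and cited from \cite{Zhou1988}, so there is nothing to compare against beyond confirming that your derivation is sound.
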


\subsection{Proofs}
\begin{proof}[Proof of Lemma~\ref{lem:equiv-pqs}]
	It is shown in~\cite{Daafouz2001,Pandey2018} that $\ref{item:lem-equiv-pqs-2}\Rightarrow\ref{item:lem-equiv-pqs-1}$, $\ref{item:lem-equiv-pqs-3}\Rightarrow\ref{item:lem-equiv-pqs-1}$ and, for strictly polytopic systems, $\ref{item:lem-equiv-pqs-1}\Leftrightarrow\ref{item:lem-equiv-pqs-2}\Leftrightarrow\ref{item:lem-equiv-pqs-3}$. Moreover, it is also shown in~\cite{Daafouz2001,Heemels2010} that the matrices $X_i$, $i\in\mathcal{N}$, satisfying~\eqref{eq:lem-equiv-pqs-2} and~\eqref{eq:lem-equiv-pqs-3} are non-singular. We prove $\ref{item:lem-equiv-pqs-2}\Leftrightarrow\ref{item:lem-equiv-pqs-4}$. 
	
	{\bf$\bm{{\ref{item:lem-equiv-pqs-4}\Rightarrow\ref{item:lem-equiv-pqs-2}}}$:} \ref{item:lem-equiv-pqs-4} implies~\ref{item:lem-equiv-pqs-2} with $X_i=\bar{S}_i$. 
	
	{\bf$\bm{{\ref{item:lem-equiv-pqs-2}\Rightarrow\ref{item:lem-equiv-pqs-4}}}$:} If~\ref{item:lem-equiv-pqs-2} holds, then $\bar{S}_i\succ 0$ and
	\begin{equation}
		\begin{bmatrix}\begin{smallmatrix}
			A_i^\top\\
			-I
		\end{smallmatrix}\end{bmatrix}^\top\begin{bmatrix}\begin{smallmatrix}
			X_i+X_i^\top-\bar{S}_i & \star\\
			A_iX_i & \bar{S}_j
		\end{smallmatrix}\end{bmatrix}\begin{bmatrix}\begin{smallmatrix}
			A_i^\top\\
			-I
		\end{smallmatrix}\end{bmatrix}=\bar{S}_j-A_i\bar{S}_iA_i^\top\succ 0,
	\end{equation}
	for all $i,j\in\mathcal{N}$. By Schur complement and congruence transformation with $\operatorname{diag}\{\bar{S}_i,I\}$, $i\in\mathcal{N}$,~\ref{item:lem-equiv-pqs-2} holds.
\end{proof}

\begin{proof}[Proof of Theorem~\ref{thm:poly-q-detectability-analysis}]
	{\bf Sufficiency:} Suppose that there exist $\bar{P}_i\in\mathbb{S}_{\succ 0}$ and $\bar{S}_i\coloneqq\bar{P}_i^{-1}$, $i\in\mathcal{N}$, for which~\eqref{eq:poly-q-detectability-analysis} holds. Let $L_i \coloneqq -A_i(\bar{P}_i+C^\top C)^{-1}C^\top$, $i\in\mathcal{N}$, such that $A_i+L_iC = A_i(I-(\bar{P}_i+C^\top C)^{-1}C^\top C)=A_i(\bar{P}_i+C^\top C)^{-1}\bar{P}_i$, $i,j\in\mathcal{N}$. We denote $Q_{ij}\coloneqq \bar{S}_j-(A_i+L_iC)\bar{S}_i(A_i+L_iC)^\top$, $i,j\in\mathcal{N}$, and will now show that $Q_{ij}\succ 0$, $i,j\in\mathcal{N}$. It holds that $\bar{P}_i\preccurlyeq \bar{P}_i+C^\top C$, $i\in\mathcal{N}$, which, since $Q_{ij} = \bar{S}_j-A_i(\bar{P}_i+C^\top C)^{-1}\bar{P}_i(\bar{P}_i+C^\top C)^{-1}A_i^\top$, $i,j\in\mathcal{N}$, implies that
    \begin{equation}
            Q_{ij}\succcurlyeq \bar{S}_j-A_i(\bar{P}_i+C^\top C)^{-1}A_i^\top,\text{ for all }i,j\in\mathcal{N}.
        \label{eq:interm7}
    \end{equation} 
    Applying Schur complement to the right-hand side~\eqref{eq:interm7} twice shows that $Q_{ij}\succ 0$, for all $i,j\in\mathcal{N}$, since $\bar{P}_i+C^\top C-A_i^\top \bar{P}_jA_i\stackrel{\eqref{eq:poly-q-detectability-analysis}}{\succ} 0$, $i,j\in\mathcal{N}$. Using Schur complement, $Q_{ij}\succ 0$ for all $i,j\in\mathcal{N}$ is equivalent to~\ref{item:lem-equiv-pqs-4} with $A_i\leftarrow A_i+L_iC$, which implies that~\eqref{eq:error-system} with $L$ in~\eqref{eq:poly-q-detectability-analysis-gain} is poly-QS and $V$~\eqref{eq:poly-lyap} is a poly-QLF. Note that we did not use strict polytopicity anywhere, hence, sufficiency holds for the full class of polytopic systems~\eqref{eq:system}.
	
	{\bf Necessity for strictly polytopic systems:} Suppose that the system~\eqref{eq:system} is poly-Q detectable, i.e., there exist $L\colon\mathbb{N}\times\mathcal{P}\rightarrow\mathbb{R}^{n_x\times n_y}$ and $\bar{P}_i\in\mathbb{S}^{n_x}$, $i\in\mathcal{N}$, such that $V$ as in~\eqref{eq:poly-lyap} is a poly-QLF for~\eqref{eq:error-system}. From~\eqref{eq:lyap-desc}, using the Schur complement,
	\begin{equation}
		\begin{bmatrix}\begin{smallmatrix}
			P(p_k) & A^\top(p_k)\\
			\star & S(p_{k+1})
		\end{smallmatrix}\end{bmatrix} + \operatorname{He}\left(\begin{bmatrix}\begin{smallmatrix}
			C^\top\\
			0
		\end{smallmatrix}\end{bmatrix}L^\top(k,\bm{p})\begin{bmatrix}\begin{smallmatrix}
			0 & I
		\end{smallmatrix}\end{bmatrix}\right)\succ 0,
		\label{eq:interm1}
	\end{equation}
	where $S(\pi)=P^{-1}(\pi)\succ 0$, $\pi\in\mathbb{P}$, $k\in\mathbb{N}$ and $\bm{p}\in\mathcal{P}$. By pointwise application of Lemma~\ref{lem:proj-lem} for all $k\in\mathbb{N}$ and $\bm{p}\in\mathcal{P}$, there exists $L$ satisfying~\eqref{eq:interm1} if and only if, for all $\pi_+,\pi\in\mathbb{P}$,
	\begin{equation}
		P(\pi)\succ 0\text{ and }\begin{bmatrix}\begin{smallmatrix}
			C & 0
		\end{smallmatrix}\end{bmatrix}_{\perp}^\top\begin{bmatrix}\begin{smallmatrix}
			P(\pi) & A^\top(\pi)\\
			\star & S(\pi_+)
		\end{smallmatrix}\end{bmatrix}\begin{bmatrix}\begin{smallmatrix}
			C & 0
		\end{smallmatrix}\end{bmatrix}_{\perp}\succ 0.
		\label{eq:Cperp}
	\end{equation}
	Using the continuity of $P$, $S$ and $A$, the second condition in~\eqref{eq:Cperp}, by pointwise application of Lemma~\ref{lem:finsler} for all $\pi_+,\pi\in\mathbb{P}$, holds if and only if there exists $\mu\in\mathbb{R}$ such that $P(\pi) - A^\top(\pi)P(\pi_+)A(\pi) + \mu C^\top C \succ 0$ for all $\pi_+,\pi\in\mathbb{P}$. It follows, due to the strict polytopicity of~\eqref{eq:system}, that $\bar{P}_i\succ 0$ and $\bar{P}_i-A_i^\top \bar{P}_jA_i + \mu C^\top C\succ 0$ for all $i,j\in\mathcal{N}$. Since $C^\top C\succcurlyeq 0$,~\eqref{eq:poly-q-detectability-analysis} holds with $\bar{\mu}>\max\{0,\mu\}$ and $\bar{P}_i\leftarrow (1/\bar{\mu})\bar{P}_i$.
\end{proof}

\begin{proof}[Proof of Theorem~\ref{thm:poly-quadratic-stabilizability-nec-suff}]
    {\bf Sufficiency:} Suppose there exist matrices $\bar{S}_i\in\mathbb{S}^{n_x}_{\succ 0}$ and $\bar{P}_i=\bar{S}_i^{-1}$, $i\in\mathcal{N}$, such that $S(\pi)=P^{-1}(\pi)$ with $P$ as in~\eqref{eq:poly-lyap} satisfies~\eqref{eq:poly-quadratic-stabilizability-nec-suff} for all $\pi_+,\pi\in\mathbb{P}$. It follows, from $\bar{S}_i\succ 0$, $i\in\mathcal{N}$, that $S(\pi)\succ 0$ for all $\pi\in\mathbb{P}$. Let $K$ be as in~\eqref{eq:poly-q-stabilizability-analysis-gain} such that $A(p_k)+BK(k,\bm{p}))=(I-BB^\top(S(p_{k+1})+BB^\top)^{-1})A(p_k)=S(p_{k+1})(S(p_{k+1})+BB^\top)^{-1}A(p_k)$, $k\in\mathbb{N}$, $\bm{p}\in\mathcal{P}$. We denote $Q(k,\bm{p}) \coloneqq P(p_k)-(A(p_k)+BK(k,\bm{p}))^\top P(p_{k+1})(A(p_k)+BK(k,\bm{p}))$, $k\in\mathbb{N}$, $\bm{p}\in\mathcal{P}$, and will now show that $Q(k,\bm{p})\succ 0$ for all $k\in\mathbb{N}$ and $\bm{p}\in\mathcal{P}$. It holds that $-S(\pi)\succcurlyeq -(S(\pi)+BB^\top)$ for all $\pi\in\mathbb{P}$, which, by expressing $Q(k,\bm{p})=P(p_k)-A^\top(p_k)(S(p_{k+1})+BB^\top)^{-1}S(p_{k+1})(S(p_{k+1})+BB^\top)^{-1}A(p_k)$, $k\in\mathbb{N}$, $\bm{p}\in\mathcal{P}$, implies that, for all $k\in\mathbb{N}$ and $\bm{p}\in\mathcal{P}$,
    \begin{equation}
        Q(k,\bm{p})\succcurlyeq P(p_k)-A^\top(p_k)(S(p_{k+1})+BB^\top)^{-1}A(p_k).
        \label{eq:interm8}
    \end{equation}
    Applying Schur complement to the right-hand side of~\eqref{eq:interm8} twice shows that $Q(k,\bm{p})\succ 0$ for all $k\in\mathbb{N}$ and $\bm{p}\in\mathcal{P}$, since $S(p_{k+1})+BB^\top -A(p_k)S(p_k)A^\top(p_k)\stackrel{\eqref{eq:poly-quadratic-stabilizability-nec-suff}}{\succ} 0$ for all $\bm{p}\in\mathcal{P}$ and $k\in\mathbb{N}$. We conclude, using also the fact that $P$ is uniformly bounded on $\mathbb{P}$, that $V$ in~\eqref{eq:poly-lyap} is a poly-QLF for~\eqref{eq:closed-loop} and, thus, $K$ in~\eqref{eq:poly-q-stabilizability-analysis-gain} renders~\eqref{eq:closed-loop} poly-QS.

    {\bf Necessity:} Suppose that system~\eqref{eq:system} is poly-Q stabilizable, i.e., there exist $K\colon\mathbb{N}\times\mathcal{P}\rightarrow\mathbb{R}^{n_u\times n_x}$ and $\bar{P}_i\in\mathbb{S}^{n_x}$, $i\in\mathcal{N}$, such that $V$ as in~\eqref{eq:poly-lyap} is a poly-QLF for~\eqref{eq:closed-loop}. Then, $\bar{P}_i\succ 0$, $\bar{S}_i=\bar{P}_i^{-1}$, $i\in\mathcal{N}$, and $S(\pi)=P^{-1}(\pi)\succ 0$, $\pi\in\mathbb{P}$. By Schur complement,~\eqref{eq:lyap-desc} yields, for all $\bm{p}\in\mathcal{P}$, $k\in\mathbb{N}$,
	\begin{equation}
		\begin{bmatrix}\begin{smallmatrix}
			P(p_k) & \star\\
			A(p_k) & S(p_{k+1}) 
		\end{smallmatrix}\end{bmatrix} + \operatorname{He}\left(\begin{bmatrix}\begin{smallmatrix}
			0\\
			B
		\end{smallmatrix}\end{bmatrix}K(k,\bm{p})\begin{bmatrix}\begin{smallmatrix}
			I & 0
		\end{smallmatrix}\end{bmatrix}\right)\succ 0.
		\label{eq:interm3}
	\end{equation}
	By pointwise application of Lemma~\ref{lem:proj-lem} for all $k\in\mathbb{N}$ and $\bm{p}\in\mathcal{P}$, there exists $K$ satisfying~\eqref{eq:interm3} if and only if
	\begin{equation}
		S(\pi)\succ 0\text{ and }\begin{bmatrix}\begin{smallmatrix}
			0 & B^\top
		\end{smallmatrix}\end{bmatrix}_{\perp}^\top\begin{bmatrix}\begin{smallmatrix}
			P(\pi) & \star\\
			A(\pi) & S(\pi_+) 
		\end{smallmatrix}\end{bmatrix}\begin{bmatrix}\begin{smallmatrix}
			0 & B^\top
		\end{smallmatrix}\end{bmatrix}_{\perp}\succ 0,
	\end{equation}
	for all $\pi_+,\pi\in\mathbb{P}$. We can complete the proof by applying Lemma~\ref{lem:finsler} pointwise for all $\pi_+,\pi\in\mathbb{P}$ and following the same steps as at the end of the proof of Theorem~\ref{thm:poly-q-detectability-analysis}.
\end{proof}

\begin{proof}[Proof of Theorem~\ref{thm:poly-q-stabilizability-analysis}]
	Suppose that there exist $\bar{P}_i\in\mathbb{S}^{n_x}$ and $X_i\in\mathbb{R}^{n_x\times n_x}$, $i\in\mathcal{N}$, such that~\eqref{eq:poly-q-stabilizability-analysis} holds with $\bar{S}_i\coloneqq \bar{P}^{-1}_i$, $i\in\mathcal{N}$. It follows that $\bar{S}_i\succ 0$, $i\in\mathcal{N}$, and, by congruence transformation with $\operatorname{diag}\{I,\bar{P}_j\}$ and Schur complement, it follows from~\eqref{eq:poly-q-stabilizability-analysis} that
    \begin{equation}
        \begin{bmatrix}\begin{smallmatrix}
			\bar{P}_i & \star & \star\\
			A_i & \operatorname{He}(X_i) +BB^\top & \star\\
			0 & \bar{P}_jX_i & \bar{P}_j
		\end{smallmatrix}\end{bmatrix}\succ 0,\text{ for all }i,j\in\mathcal{N}.
        \label{eq:interm9}
    \end{equation}
    Multiplying~\eqref{eq:interm9} by $\xi_i(\pi)\xi_j(\pi_+)$ and summing over all $i,j\in\mathcal{N}$, we obtain
	\begin{equation}
		\begin{bmatrix}\begin{smallmatrix}
			P(\pi) & \star & \star\\
			A(\pi) & \operatorname{He}(X(\pi)) +BB^\top & \star\\
			0 & P(\pi_+)X(\pi) & P(\pi_+)
		\end{smallmatrix}\end{bmatrix}\succ 0,\text{ for all }\pi_+,\pi\in\mathbb{P},\label{eq:interm4}
	\end{equation}
	with $P(\pi)=\sum_{i\in\mathcal{N}}\xi_i(\pi)\bar{P}_i$, as in~\eqref{eq:poly-lyap}, and $X(\pi)=\sum_{i\in\mathcal{N}}\xi_i(\pi)X_i$, $\pi\in\mathbb{P}$. Since $\bar{P}_i\succ 0$, $i\in\mathcal{N}$, we have $P(\pi)\succ 0$, $\pi\in\mathbb{P}$. From~\eqref{eq:interm4}, we obtain $X(\pi)+X^\top(\pi)-X^\top(\pi)P(\pi_+)X(\pi)-A(\pi)S(\pi)A^\top(\pi)+BB^\top\succ 0$ for all $\pi_+,\pi\in\mathbb{P}$. By pointwise application of Lemma~\ref{lem:youngs}, we conclude that $S$ satisfies~\eqref{eq:poly-quadratic-stabilizability-nec-suff} for all $\pi_+,\pi\in\mathbb{P}$. Thus, by Theorem~\ref{thm:poly-quadratic-stabilizability-nec-suff}, $K$ as in~\eqref{eq:poly-q-stabilizability-analysis-gain} with $S(\pi)=P^{-1}(\pi)$, $\pi\in\mathbb{P}$, and $P$ as in~\eqref{eq:poly-lyap} renders~\eqref{eq:closed-loop} poly-QS with poly-QLF~\eqref{eq:poly-lyap}.
\end{proof}

\addtolength{\textheight}{-17.5cm}   

\begin{proof}[Proof of Theorem~\ref{thm:stab-synth}]
	{\bf$\bm{{\ref{item:thm-stab-synth-2}\Rightarrow\ref{item:thm-stab-synth-1}}}$:} If~\ref{item:thm-stab-synth-2} holds, then~\ref{item:thm-stab-synth-1} holds with $X_{ij}\leftarrow I$ and $\bar{K}_{ij}\leftarrow \bar{K}_i$, $i,j\in\mathcal{N}$. 
		
	{\bf$\bm{{\ref{item:thm-stab-synth-3}\Rightarrow\ref{item:thm-stab-synth-2}}}$:} If~\ref{item:thm-stab-synth-3} holds, then $\bar{S}_i\succ 0$ and
	\begin{equation}
		\begin{bmatrix}\begin{smallmatrix}
			\bar{S}_i & \star\\
			(A_i+B\bar{K}_i)\bar{S}_i & \bar{S}_j
		\end{smallmatrix}\end{bmatrix}\succ 0,\text{ for all }i,j\in\mathcal{N},
	\end{equation}
	which, by~\ref{item:lem-equiv-pqs-4} with $A_i\leftarrow A_i+B\bar{K}_i$, implies~\ref{item:thm-stab-synth-2} with the same $K_i$, $i\in\mathcal{N}$, and with~\eqref{eq:poly-lyap} being a poly-QLF for~\eqref{eq:closed-loop}.
	
	{\bf $\bm{{\ref{item:thm-stab-synth-2}\Rightarrow\ref{item:thm-stab-synth-3}}}$ for strictly polytopic systems:} Suppose that $\ref{item:thm-stab-synth-2}$ holds, then, due to strict polytopicity,~\ref{item:thm-stab-synth-3} (with non-singular $X_i$, $i\in\mathcal{N}$) follows from~\ref{item:lem-equiv-pqs-4}.
	
	{\bf$\bm{{\ref{item:thm-stab-synth-4}\Rightarrow\ref{item:thm-stab-synth-1}}}$:} If~\ref{item:thm-stab-synth-4} holds, then $\bar{P}_i=\bar{S}_i^{-1}\succ 0$, $i\in\mathcal{N}$. Congruence transformation with $\operatorname{diag}\{I,I,\bar{P}_j\}$, multiplying by $\xi_j(\pi_+)$ and summing over all $j\in\mathcal{N}$ yield
	\begin{equation}
		\begin{bmatrix}\begin{smallmatrix}
			\operatorname{He}(X_i(\pi_+))-\bar{S}_i & \star & \star\\
			A_iX_i(\pi_+)+BY_i(\pi_+) & \operatorname{He}(Z_i) & \star\\
			0 & P(\pi_+)Z_i & P(\pi_+)
		\end{smallmatrix}\end{bmatrix}\succ 0,\label{eq:interm6}
	\end{equation}
	with $\left[\begin{smallarray}{@{}ccc@{}} X_i(\pi_+) & Y_i(\pi_+) & P(\pi_+)\end{smallarray}\right]=\sum_{j\in\mathcal{N}}\xi_j(\pi_+)\left[\begin{smallarray}{@{}ccc@{}}X_{ij} & Y_{ij} & \bar{P}_i\end{smallarray}\right]$, for all $\pi_+\in\mathbb{P}$, $i\in\mathcal{N}$. From~\eqref{eq:interm6}, $X_i(\pi)$, $i\in\mathcal{N}$, are non-singular for all $\pi\in\mathbb{P}$. Applying Lemma~\ref{lem:youngs} to~\eqref{eq:interm6} followed by a congruence transformation with $\operatorname{diag}\{X_i^{-1}(\pi_+),I,I\}$ yields
	\begin{equation*}
		\begin{bmatrix}\begin{smallmatrix}
			\bar{P}_i & \star & \star\\
			A_i+ B\bar{K}_i(\pi_+) & \operatorname{He}(Z_i) & \star\\
            0 & P(\pi_+)Z_i & P(\pi_+)
		\end{smallmatrix}\end{bmatrix}\succ 0,\text{ for all }i\in\mathcal{N}, \pi_+\in\mathbb{P},
	\end{equation*}
	where $\bar{K}_i(\pi) = Y_i(\pi)X_i^{-1}(\pi)$ and $S(\pi)=P^{-1}(\pi)$, $\pi\in\mathbb{P}$, $i\in\mathcal{N}$. Applying the Schur complement and Lemma~\ref{lem:youngs}, it follows that
    \begin{equation}
        \begin{bmatrix}\begin{smallmatrix}
            \bar{P}_i & \star \\
            A_i+B\bar{K}_i(\pi_+) & S(\pi_+)
        \end{smallmatrix}\end{bmatrix}\succ 0,\text{ for all }i\in\mathcal{N},\pi_+\in\mathbb{P}.
    \end{equation}
    Next, we multiply by $\xi_i(\pi)$, sum over all $i\in\mathcal{N}$ and apply Schur complement to obtain $P(p_k) - (A(p_k)+BK(k,\bm{p}))^{\top}P(p_{k+1})(A(p_k)+BK(k,\bm{p}))\succ 0$ for all $k\in\mathbb{N}$, $\bm{p}\in\mathcal{P}$, where $K(k,\bm{p})=\sum_{i\in\mathcal{N}}\xi_i(p_k)\bar{K}_i(p_{k+1})$, $k\in\mathbb{N}$, $\bm{p}\in\mathcal{P}$. Since $P$, $A$ and $K$ are polytopic, $V$ as in~\eqref{eq:poly-lyap} satisfies~\eqref{eq:lyap-bounds}-\eqref{eq:lyap-desc} for~\eqref{eq:closed-loop} with some $a_i\in\mathbb{R}_{>0}$, $i\in\mathbb{N}_{[1,3]}$.
\end{proof}

\end{document}